\DeclareSymbolFont{cyrletters}{OT2}{wncyr}{m}{n}
\DeclareMathSymbol{\Sha}{\mathalpha}{cyrletters}{"58}
\newtheorem{theorem}{Theorem}[section]
\newtheorem{corollary}[theorem]{Corollary}
\newtheorem{lemma}[theorem]{Lemma}
\newtheorem{conjecture}[theorem]{Conjecture}
\newtheorem{proposition}[theorem]{Proposition}
\newenvironment{remarks}{\noindent {\bf Remarks.}}{}
\newcommand{\Q}{\mathbb Q}
\newcommand{\R}{\mathbb R}
\newcommand{\C}{\mathbb C}
\newcommand{\Z}{\mathbb Z}
\newcommand{\F}{\mathbb F}
\newcommand{\Ok}{{\mathcal O}_K}
\DeclareMathOperator{\Spec}{Spec}
\DeclareMathOperator{\Hom}{Hom}
\DeclareMathOperator{\Det}{Det}
\DeclareMathOperator{\Fil}{Fil}
\begin{document}
\title[Base change of elliptic curves]{Equivariant Birch-Swinnerton-Dyer conjecture for the base change of
elliptic curves: An example}
\author{Tejaswi Navilarekallu}
\address{Department of Mathematics\\
Indian Institute of Science\\
Bangalore 560012 India.}
\date{}
\begin{abstract}
Let $E$ be an elliptic curved defined over $\Q$ and let $K/\Q$ be a finite
Galois extension with Galois group $G$.  The equivariant
Birch-Swinnerton-Dyer conjecture for $h^1(E\times_{\Q} K)(1)$ viewed as a
motive over $\Q$ with coefficients in $\Q[G]$ relates the twisted $L$-values
associated with $E$ with the arithmetic invariants of the same.  In this
paper we prescribe an approach to verify this conjecture for a given data.
Using this approach, we verify the conjecture for an elliptic curve of
conductor 11 and an $S_3$-extension of $\Q$.
\end{abstract}
\keywords{Elliptic curves, Tamagawa numbers, equivariant conjecture,
Birch-Swinnerton-Dyer conjecture}
\maketitle{}

\section{Introduction}

Let $E$ be an elliptic curve defined over $\Q$.  Let $K/\Q$ be a finite
Galois extension with Galois group $G$ and let $E_K := E \times_{\Spec \Q}
\Spec K$.  Our interest is in the motive $M := h^1(E_K)(1)$ which has
a natural action of the semisimple algebra $\Q[G]$.  We regard $M$
as a motive defined over $\Q$ with coefficients in $\Q[G]$.  For a ring
$S$, we let $\zeta(S)$ denote it's centre.  Let
$$ \Xi = \Det_{\Q[G]} E(K)_{\Q} \otimes \Det^{-1}_{\Q[G]} E(K)_{\Q}^*
\otimes \Det^{-1}_{\Q[G]} H^1((E_K)(\C),\Q)^+ \otimes \Det_{\Q[G]}
H^0(E_K,\Omega_{E_K}^1)^*, $$
where $\Omega_{E_K}^1$ is the sheaf of differentials and $\Det_{\Q[G]}$
is a $\zeta(\Q[G])^{\times}$-module valued functor introduced below.  There
is an isomorphism
$$\vartheta_{\infty} : \zeta(\R[G]) \simeq \Xi \otimes \R,$$
given by the height pairing and the period isomorphism attached to the
elliptic curve.  The rationality part of the equivariant conjecture says
that the special value $L^*(M,0)^{-1}$, which can be viewed as an element of
$\zeta(\R[G])$, maps under $\vartheta_{\infty}$ into an element of $\Xi
\otimes 1$.

Let $l$ be a rational prime and let $S_l$ be the finite set of primes in
$\Q$ consisting of primes of bad reduction, ramified primes, infinite prime
and $l$.  There exists a perfect complex $R\Gamma_c(\Z_{S_l},
H_l(M))$ of $\Q_l[G]$-modules along with an isomorphism
$$ \vartheta_l: \Xi(M) \otimes \Q_l \simeq \Det_{\Q_l[G]} R\Gamma_c(
\Z_{S_l},H_l(M))$$
of $\Q_l[G]$-modules.

Let $T_l := \mathrm{Ind}_K^{\Q} \left( \varprojlim_n
E(\overline{\Q})[l^n] \right)$, a $\Z_l$-lattice in $V_l := H_l(M)$.
Then, $R\Gamma_c(\Z_{S_l},T_l)$ is a perfect complex of $\Z_l[G]$-modules
with $R\Gamma_c(\Z_{S_l},T_l) \otimes_{\Z_l[G]} \Q_l[G] \simeq
R\Gamma_c(\Z_{S_l},V_l)$.  This along with a trivialization $\tau_l : \Xi
\otimes \Q_l \simeq \zeta(\Q_l[G])$ gives an element $\xi_l =
([R\Gamma_c(\Z_{S_l},T_l)];\tau_l) \in K_0(\Z_l[G];\Q_l)$.  On the other
hand, the leading coefficient $L^*(M,0)$ of the motivic $L$-function at $s=0$
gives a class $\theta_l \in K_0(\Z_l[G];\Q_l)$, obtained via the long exact
sequence of $K$-theory.  The equivariant conjecture formulated by Burns and
Flach states that $\xi_l - \theta_l$ vanishes in $K_0(\Z_l[G];\Q_l)$ for all
primes $l$.

In this paper, we present a technique to verify the above conjecture for a
given elliptic curve and a fixed extension under certain hypotheses.  While
the equality in the conjecture is exactly checked in the commutative case,
we only give a numerical verification in the noncommutative case.  We prove

\begin{theorem} \label{intro_theorem}
Let $E$ be the elliptic curve $y^2 + xy + y = x^3 + 11$ and let $K$ be the
splitting field of $x^3 -4x + 3$ over $\Q$.  Let $G (\simeq S_3)$ denote the
Galois group of $K$ over $\Q$.  Let $l$ be an odd rational prime.
If $|\Sha(E/K)|_l = 1$ then the $l$-part of the equivariant conjecture holds
numerically for the motive $h^1(E_K)(1)$ and the $\Z$-order $\Z[G]$.
\end{theorem}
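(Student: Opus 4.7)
The verification splits into two parts: the rationality statement for $\vartheta_\infty$, and the $l$-adic integrality $\xi_l = \theta_l$ in $K_0(\Z_l[G];\Q_l)$ for each odd prime $l$ satisfying the hypothesis.

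For the rationality step I would work through the Wedderburn decomposition $\Q[G] \simeq \Q \times \Q \times M_2(\Q)$ attached to the three irreducible representations of $S_3$ (trivial, sign $\epsilon$, and the $2$-dimensional $\rho$). Correspondingly $L(M,s) = L(E,s)\,L(E,\epsilon,s)\,L(E,\rho,s)^2$, and the three twisted leading coefficients can be computed to high numerical precision. On the algebraic side one needs to (i) determine $E(K)$ as a $G$-module using descent and evaluate the equivariant N\'eron--Tate regulator from the height pairings of chosen generators; (ii) compute the real and complex periods of $E$ together with the Artin periods of $\epsilon$ and $\rho$ that enter $\vartheta_\infty$; and (iii) tabulate Tamagawa factors at the bad primes of $E/K$, namely primes lying above $11$ (bad reduction for $E$) and above $13$ (the discriminant of $x^3 - 4x + 3$, so the unique prime ramified in $K/\Q$). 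Assembling these into $\Xi$ and matching against the $L$-side produces a rational element $x \in \Xi$ with $\vartheta_\infty(L^*(M,0)^{-1}) = x \otimes 1$.

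For the $l$-adic step I would realise $R\Gamma_c(\Z_{S_l},T_l)$ as an explicit Selmer-type complex of $\Z_l[G]$-modules, with $T_l$ built from the Tate module of $E$ induced from $K$ to $\Q$. Its cohomology is controlled by $E(K) \otimes \Z_l$, the $l$-Selmer group of $E/K$, and local $H^1$'s at the places in $S_l$; the hypothesis $|\Sha(E/K)|_l = 1$ identifies the Selmer module with $E(K) \otimes \Q_l/\Z_l$, removing the only obstruction to writing down a clean free $\Z_l[G]$-presentation. A compatible choice of $\Z_l[G]$-basis then gives the trivialization $\tau_l$, and combining with the rational element $x$ reduces $\xi_l = \theta_l$ to a comparison of local Euler factors at primes in $S_l$, carried out componentwise on the three Wedderburn blocks.

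The principal obstacle is the noncommutative $M_2(\Q)$ block corresponding to $\rho$: on this block the $\zeta(\Q_l[G])^\times$-valued reduced-determinant invariants involve transcendental periods and a twisted $L$-value that are inaccessible in closed form, so equality can only be checked to high but finite $l$-adic precision --- this is exactly the ``numerical'' part in the statement. A secondary difficulty arises at $l = 3$, where $3 \mid |G|$ means $\Z_3[G]$ is not a maximal $\Z_3$-order, so the localization sequence defining $K_0(\Z_3[G];\Q_3)$ is more intricate and the comparison of $\tau_l$ with the rational trivialization requires extra bookkeeping rather than a component-by-component reduction to maximal orders.
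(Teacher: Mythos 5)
Your sketch has the right global shape (rationality via explicit twisted $L$-values and periods, integrality via a Selmer-type complex analysed prime by prime), but it is missing the two ideas that the paper's proof actually turns on. First, the hypothesis $|\Sha(E/K)|_l=1$ does \emph{not} ``remove the only obstruction to a clean free $\Z_l[G]$-presentation'': the real obstruction is that the Bloch--Kato local conditions $H^1_f(K_v,T_l(E))$ in general fail to have finite projective dimension over $\Z_l[G]$ (this is a local problem at ramified places and at $v\mid l$, independent of $\Sha$). The paper replaces them by noncanonical integral structures --- at $v\mid l$ the image of the formal group $\widehat{E}(\mathfrak{m}_v^{n_v})$ with $n_v$ divisible by the ramification index, the zero complex at finite $v$ where $l$ divides the order of the image of inertia, and $H^0(K_v,T_l(E))$ at archimedean places --- precisely so that every term of the exact triangle becomes a perfect $\Z_l[G]$-complex; the discrepancy with the classical conditions is then accounted for by Tamagawa numbers and local Euler factors. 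Without this step the element $\xi_l$ you want to compare with $\theta_l$ is not even defined by your construction. Second, there is no regulator computation: the paper checks $L(E/K,1)\neq 0$ and invokes Zhang's theorem to conclude $E(K)$ is finite, so $\Xi$ collapses to the Betti/de Rham part, and the image of $\Xi$ under $\vartheta_\infty$ is computed from a normal integral basis generator $\alpha_0$ of $\Ok$ (free over $\Z[G]$ by Taylor's theorem, since $K/\Q$ is tame) via the resolvent determinants $\det\bigl(\sum_{g}g(\alpha_0)\rho_\eta(g^{-1})\bigr)$, together with $\Omega^{\pm}$ and the N\'eron-model comparison that makes the integral de Rham lattice free. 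Your plan of descent plus equivariant N\'eron--Tate heights would not fit the framework the theorem is stated in, which assumes $E(K)$ torsion.

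On the integral comparison itself, the paper's bookkeeping is by $\epsilon$-invariants of the finite modules $\Phi_v$ (cokernels of the modified local conditions), which are well defined up to $K_1(\Z_l[S_3])$ only because the class group $Cl(\Z_l[S_3])$ vanishes; explicit $\Z_l[S_3]$-presentations of $\Phi_5$, $\Phi_{11}$ and $\Phi_l$ (for $l>5$, depending on the residue degree) are exhibited and cancelled against the $l$-parts of the local $L$-factors, with a case split $l=3$, $l=5$, $l>5$. Your ``componentwise on the three Wedderburn blocks'' reduction is valid only for $l\nmid 6$; at $l=3$ the issue is not that the localization sequence for $K_0(\Z_3[G];\Q_3)$ is more intricate, but that a central element with unit components need not lie in the image of $K_1(\Z_3[S_3])$ --- the paper fixes the signs by multiplying with the image of an explicit element of $K_1(\Z_3[S_3])$ (multiplication by a transposition). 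You are also right that the only genuinely ``numerical'' input is $L(E\otimes\psi,1)$ for the $2$-dimensional character, computed by Dokchitser's method; but note that the paper's own verification is carried out for the splitting field of $x^3-4x+1$ (ramified prime $229$), not $x^3-4x+3$, so your identification of $13$ as the ramified prime follows the printed statement rather than the proof actually given.
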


We analyse the complex $R\Gamma_c$ via the exact triangle
\begin{eqnarray} \label{exact_t}
R\Gamma_c(\Z_{S_l},T_l) \rightarrow R\Gamma_f(\Q,T_l) \rightarrow
\oplus_{p \in S_l} R \Gamma_f(\Q_p,T_l),
\end{eqnarray}
where the cohomology of the complexes $R\Gamma_f(\Q,T_l)$ can be described
in terms of the group of points and the Tate-Shafarevich group of $E$.
However, to ensure that all the terms in the triangle (\ref{exact_t}) are
perfect complexes of $\Z_l[G]$-modules we have to make certain noncanonical
choices (see Section \ref{arithmetic} below).

On the analytic side, we use the theory of modular symbols to write down the
special value $L(E\otimes \chi,1)$ for a Dirichlet character $\chi$.  However,
no such theory exists for elliptic curves defined over a general number field,
or for twists by nonabelian characters.  We therefore numerically compute the
special values using methods of Tollis as explained in \cite{td}.

The paper is organized as follows.  In Section 2, we give a brief description
of the conjecture specialized to the case we are interested in.  We then
indicate methods to compute the arithmetic and the analytic sides of the
conjecture in Section 3 and 4 respectively.  We prove the main result in the
last section.

\section{Equivariant conjecture}

\subsection{Algebraic $K$-groups}
Let $R$ and $S$ be rings, and let $\phi: R \rightarrow S$ be a ring
homomorphism.  Let $K_i(*)$ denote the associated $K$-groups.  We have
the following long exact sequence
$$ K_1(R) \longrightarrow K_1(S) \stackrel{\delta}{\longrightarrow}
K_0(R;S) \longrightarrow K_0(R) \longrightarrow K_0(S)$$
where $K_0(R;S)$ is the relative $K_0$-group.

In terms of generators and relations, $K_0(R;S)$ is generated by triples
$(M,N;\lambda)$ where $M, N$ are finitely generated projective $R$-modules
and $\lambda: M \otimes S \rightarrow N \otimes S$ is an $S$-isomorphism,
and with relations given by short exact sequences (see \cite{cr}).

Similarly, for any ring $R$, the group $K_1(R)$ is generated by pairs
$(M;\phi)$ where $M$ is a free $R$-module of finite rank and $\phi$ is
an $R$-autorphism of $M$, and with relations given by the inclusions
of free $R$-modules.
We use these descriptions to denote elements of $K_1$ and the relative
$K_0$-group.

For a $\Q$-algebra $A$ and a $\Z$-order $\mathfrak{A}$ in $A$, we denote
by $Cl(\mathfrak{A})$ the associated class group.  That is,
$$ Cl(\mathfrak{A}) := \mathrm{ker}\left( K_0(\mathfrak{A}) \rightarrow
K_0(A) \right).$$
We use analogous notation in the local case as well.

\subsection{Virtual objects} \label{virtual_objects}

Let $R$ be a ring and let $\mathrm{PMod}(R)$ denote the category of finitely
generated projective $R$-modules.  In \cite{deligne}, Deligne has constructed
a Picard category $V(R)$ of virtual objects and a universal determinant
functor
$$[~] : \mathrm{PMod}(R) \rightarrow V(R) $$
satisfying certain conditions.  This functor naturally extends to a functor
$$ [~]: D^p(R) \rightarrow V(R) $$
where $D^p(R)$ is the category of perfect complexes of $R$-modules (see
\cite{bf1,navil2} for details).

It follows from the proof of the existence of $V(R)$ in \cite{deligne} that
there are isomorphisms
\begin{eqnarray}
K_i(R) \stackrel{\thicksim}{\longrightarrow} \pi_i (V(R))
\end{eqnarray}
for $i = 0,1$ where $K_i(R)$ denotes the algebraic $K$-group associated to
$R$, and $\pi_0(V(R))$ is the group of
isomorphism classes of objects of $V(R)$ and $\pi_1(V(R)) =
\mathrm{Aut}_{V(R)} (1_{V(R)})$.

Given a finitely generated subring $S$ of $\Q$, an $S$-order $\mathfrak{A}$
of a finite dimensional $\Q$-algebra $A$ is a finitely generated $S$-module
such that $\mathfrak{A} \otimes_{\Z} \Q = A$.  For any such $S$-order, and
any field extension $F$ of $\Q$, one has a notion of relative virtual objects
$V(\mathfrak{A};F)$ and $V(\mathfrak{A}_p; \Q_p)$, where $\mathfrak{A}_p =
\mathfrak{A} \otimes_{\Z} \Z_p$ (see \cite{bf1} for details).  There are
isomorphisms
\begin{eqnarray}
\pi_0( V(\mathfrak{A};F)) \stackrel{\thicksim}{\longrightarrow}
K_0(\mathfrak{A};F)
\end{eqnarray}
and
\begin{eqnarray} \label{rel_pi0}
 \pi_0( V(\mathfrak{A}_p;\Q_p)) \stackrel{\thicksim}{\longrightarrow}
K_0(\mathfrak{A}_p;\Q_p),
\end{eqnarray}
which are compatible with the Mayor-Vietoris sequences (cf. Prop. 2.5 in
\cite{bf1}).  Given a perfect complex $C_{\bullet}$ of
$\mathfrak{A}$-modules
and an isomorphims $\tau: [C_{\bullet} \otimes_{\mathfrak{A}} F]
\simeq 1_{V(F)}$, we denote by $([C_{\bullet}];\tau)$ the corresponding
element of $K_0(\mathfrak{A};F)$ under the above isomorphism.  We use
analogous notation in the local case as well.

\noindent {\bf Remarks.}
\begin{enumerate}
\item Let $R$ be a commutative ring.  Then an $R$-module $P$ is
projective if and only if it is locally free at all the prime ideals of $R$.
In this case, the determinant functor can be defined locally by
$$ \Det_{R_{\mathfrak{p}}}(P_{\mathfrak{p}}) =
\left( \wedge^{\mathrm{rank}_{R_{\mathfrak{p}}}(P_{\mathfrak{p}})}
P_{\mathfrak{p}}, \mathrm{rank}_{R_{\mathfrak{p}}}(P_{\mathfrak{p}}) \right) $$
for every prime $\mathfrak{p} \in \Spec(R)$.  Note that thus defined
$\Det_R(P)$ is a graded line bundle.  Let $\mathcal{P}(R)$ denote the category
of graded line bundles over $R$.  Then the universal property of $V(R)$
defines a tensor functor
$f_R: V(R) \rightarrow \mathcal{P}(R)$.  The functor $f_R$ is an equivalence
if $R$ is either semisimple or a finite flat $\Z$-algebra.  For such rings,
we assume that the determinant functor is constructed as above.

\item Even if $R$ is not commutative, but is semisimple, one can construct the
determinant functor in a similar fashion by looking at the indecomposable
idempotents.  We give this construction below since it is key to some of our
computations.

By Wedderburn's decomposition we can assume that $R$ is a central simple
algebra over a field $F$.  So $R \simeq M_n(D)$ for some division ring $D$
with centre $F$.  Further, by fixing an exact Morita equivalence
$\mathrm{PMod}(M_n(D)) \rightarrow \mathrm{PMod} (D)$, we may assume that
$R = D$.  Fix a field extension $F'/F$ such that $D \otimes_F F' \simeq
M_d(F')$.  Let $e$ be an indecomposable idempotent of $M_d(F')$ and let
$e_1, \ldots, e_d$ be an ordered $F'$-basis of $eM_d(F')$.  Let $V$ be a
finitely generated projective (and hence free) $D$-module.  Let $\{v_1,
\ldots, v_r \}$ be a $D$-basis of $V$.  Set $b := \wedge e_iv_j$.  This is
an $F'$-basis of $\Det_{F'} (e(V \otimes_F
F'))$.  Since any change in the basis $\{ v_i \}_i$ multiplies $b$ by
an element of $F^{\times}$, the $F$-space spanned by $b$ yields a
well-defined graded $F$-line bundle.  This defines the determinant functor.
Thus, for a semisimple ring, the determinant functor can be constructed
as taking values in graded line bundles over the centre.  We use this
concrete construction when referring to the functor over $\R[G], \Q[G]$
and $\Q_l[G]$.

\end{enumerate}

\subsection{The motive $h^1(E_K)(1)$}
Let $K/\Q$ be a finite Galois extension with Galois group
$G$.  Let $E$ be an elliptic curve defined over $\Q$.  Denote by $E_K$ the
base change $E \times_{\Spec \Q} \Spec K$.  The Galois group $G$ acts on
the motive $M = h^1(E_K)(1)$ and thus the $\Q$-algebra $\Q[G]$ acts on
the realizations of $M$.  We regard $M$ as a motive defined over $\Q$
with coefficients in $\Q[G]$.  Note that the algebra $\Q[G]$ acts on
$$ H_{dR} := H_{dR}^1(E_K)/Fil^1 \simeq H^0(E_K,\Omega^1_{E_K})^*, $$
and on
$$ H_B^+ := \oplus_{v \in S_{\infty}(K)} H^1(vE(\C),2\pi i \Q)^{G_v},$$
where $*$ denotes the dual, and $S_{\infty}(K)$ denotes the set of infinite
places of $K$.  Here $H^1(vE(\C),*) = H^1(\sigma E(\C),*)$ for $\sigma \in
G$ corresponding to $v$.  Denoting by $c \in G$ the complex conjugation,
we identify $H^1(\sigma E(\C),*)$ and $H^1(c \circ \sigma E(\C),*)$ via
the isomorphism $\sigma E(\C) \simeq c \circ \sigma E(\C)$.

After identifying $H^1(vE(\C),*)$ with the dual of the
homology group, we define period isomorphism
\begin{eqnarray} \label{period_map}
\pi: H_B^+ \otimes_{\Q} \R \rightarrow H_{dR} \otimes_{\Q} \R
\end{eqnarray}
by
$$ \gamma \mapsto (\omega \mapsto \int_{\gamma} \omega) $$
for $\gamma \in \oplus_{v \in S_{\infty}(K)} H_1(vE(\C),\Q)^{G_v}$
and $\omega \in H^0(E_K,\Omega_{E_K}^1)$.

Let
$$ \Xi := \Det E(K)_{\Q} \otimes \Det^{-1} E(K)^*_{\Q} \otimes
\Det^{-1} H^1(E_K(\C),\Q)^+ \otimes \Det H^0(E_K,\Omega^1_{E_K})^*,$$
where all the determinants are taken over the ring $\Q[G]$.
Thanks to the above defined period isomorphism and the canonical height
pairing associated with the rational points on the elliptic curve, we get an
isomorphism
$$\vartheta_{\infty}: \Xi \otimes_{\Q} \R \simeq \zeta(\R[G])$$
of $\R[G]$-modules.

Let $l$ be a rational prime and let $S$ be the finite set of
primes in $\Q$ consisting of primes of bad reduction of $E$,
ramified primes in $K/\Q$ and the infinite prime.   Let $S_l := S
\cup \{ l \}$.  Also, let $S_{\infty} = \{ \infty \}$, $S_f = S \setminus
\{\infty\}$ and $S_{f,l} = S_l \setminus \{\infty\}$.  For any set $S_0$
of primes in $\Q$, we let $S_0(K)$ denote the primes in $K$ lying above
those in $S_0$.

Let $T_l := \mathrm{Ind}_K^{\Q} \left( \varprojlim_n
E(\bar{\Q})[l^n] \right)$ be the Tate module assocaited with $E_K$.
Then we have $V_l := H_l(M) \simeq T_l \otimes_{\Z_l} \Q_l$.  There exists
a perfect complex $R\Gamma_c(\Z_{S_l}, V_l)$ of $\Q_l[G]$-modules
along with an isomorphism
$$ \vartheta_l : \Xi \otimes_{\Q} \Q_l \simeq \Det_{\Q_l[G]}
R\Gamma_c(\Z_{S_l},V_l) $$
of $\Q_l[G]$-modules obtained via comparison isomorphisms
(see \cite{bf1,navil2} for details).

Analogously, one can define a perfect complex
$R\Gamma_c(\Z_{S_l}, T_l)$ of $\Z_l[G]$-modules with a mapping to
$R\Gamma_c(\Z_{S_l}, V_l)$ such that
$$ R\Gamma_c(\Z_{S_l}, T_l) \otimes_{\Z_l[G]} \Q_l[G] \simeq
R\Gamma_c(\Z_{S_l}, V_l). $$

\subsection{Special values of $L$-functions} \label{conj_formulation}
In this section we state the equivariant conjecture as formulated by
Burns and Flach.  See \cite{bf1} or \cite{navil2} for more details.

Let $\widehat{G}$ denote the set of irreducible complex characters of $G$.
The $\Q[G]$-equivariant $L$-function associated with the motive $h^1(E_K)(1)$
is the tuple
$$ L(h^1(E_K)(1),s) := \left( L(E\otimes \eta,s+1) \right)_{\eta \in
\widehat{G}}, $$
where $L(E\otimes \eta,s)$ is the twist of the Hasse-Weil $L$-function
attached to $E$.
For $\eta \in \widehat{G}$ let $L^*(E\otimes \eta,1)$ denote the leading
nonzero coefficient in the Talylor expansion of $L(E\otimes \eta,s)$ at
$s = 1$.  Then we let
$$ L^*(h^1(E_K)(1)) := \left(L^*(E\otimes \eta,1) \right)_{\eta \in
\widehat{G}}.$$
Note that $L(h^1(E_K)(1),s) \in \prod_{\eta \in \widehat{G}} \C \simeq \zeta(
\C[G])$ where $\zeta$ denotes the centre.  Hence, we can identify
$L^*(h^1(E_K)(1))$ with an element of $\zeta(\R[G])^{\times}$.  The rationality
part of the equivariant conjecture states that

\begin{conjecture} \label{rationality_conjecture}
With the notations as above, one has
$$ \vartheta_{\infty}^{-1}(L^*(h^1(E_K)(1))^{-1}) \in \Xi \otimes 1 \subset
\Xi \otimes \R.$$
\end{conjecture}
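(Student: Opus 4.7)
The plan is to unwind the conjecture character by character via the Wedderburn decomposition of $\Q[G]$. Since $\zeta(\R[G]) \simeq \prod_{\eta} F_{\eta}$ where $\eta$ ranges over Galois orbits of irreducible complex characters of $G$ and each $F_{\eta}$ is $\R$ or $\C$, and since $\vartheta_{\infty}$ respects this decomposition by construction, the assertion $\vartheta_{\infty}^{-1}(L^*(h^1(E_K)(1))^{-1}) \in \Xi \otimes 1$ can be checked on each $\eta$-component separately. The task then reduces to showing that the $\eta$-component of $\vartheta_{\infty}^{-1}(L^*)$ is defined over the character field $\Q(\eta)$.

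Unraveling the definition of $\vartheta_{\infty}$ on the $\eta$-isotypic component, the $\Q(\eta)$-line $\Xi_{\eta}$ is generated by top wedges of bases of the $\eta$-parts of $E(K)_{\Q}$, its dual under the height pairing, $H_B^+$ and $H_{dR}$. The height pairing and the period isomorphism $\pi$ of (\ref{period_map}) then express the image of $L^*(E\otimes\eta,1)^{-1}$ under $\vartheta_{\infty,\eta}^{-1}$, up to a factor in $\Q(\eta)^{\times}$ coming from the choice of bases, as the classical quotient
\[ \frac{L^*(E\otimes\eta,1)}{R_{\eta}(E/K)\,\Omega_{\eta}(E/K)}, \]
where $R_{\eta}$ and $\Omega_{\eta}$ denote the $\eta$-components of the N\'eron--Tate regulator and of the period. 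The conjecture thus amounts to the statement that this classical ratio lies in $\Q(\eta)$.

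For abelian $\eta$ I would invoke the theory of modular symbols for the newform $f_E$ attached to $E$: the work of Birch and Manin yields an explicit expression of $L(E\otimes\eta,1)$ as a $\Q(\eta)$-linear combination of the real and imaginary periods of $f_E$, giving the desired rationality at once. The main obstacle is the non-abelian case, where no symbolic integration is available and the statement becomes an instance of Deligne's conjecture on critical values of motivic $L$-functions. For the $S_3$-setting of Theorem \ref{intro_theorem} the only non-trivial case is the two-dimensional irreducible $\rho$; one natural attempt is Artin formalism, using $\rho \oplus \mathbf{1} = \mathrm{Ind}_{C_3}^{S_3}\mathbf{1}$ to relate $L(E\otimes\rho,1)$ to $L$-values of $E$ over the real quadratic subfield of $K$, but tracking how the $\eta$-isotypic determinant structure on $\Xi_{\eta}$ transforms under induction is subtle. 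As the author indicates, in practice one computes $L(E\otimes\rho,1)$ numerically by the method of Tollis and verifies the rationality of the above ratio to high precision, which is what underlies the word ``numerically'' in the statement of Theorem \ref{intro_theorem}.
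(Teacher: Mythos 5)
Your overall strategy is the same as the paper's: the statement is a conjecture, and what the paper actually does is verify it for the single pair $(E,K)$ of Theorem \ref{intro_theorem} by decomposing over $\widehat{G}$, computing the abelian twists $L(E,1)$ and $L(E\otimes\chi,1)$ exactly by modular symbols, computing $L(E\otimes\psi,1)$ only numerically by the Tollis--Dokchitser method, and comparing with the explicit image of $\Xi$ under $\vartheta_{\infty}$; your proposal mirrors this structure and correctly identifies that the nonabelian component can only be checked numerically. However, your reduction step is stated incorrectly: the $\eta$-component of $\vartheta_{\infty}(\Xi)$ is \emph{not} $R_{\eta}\Omega_{\eta}$ with $\Omega_{\eta}$ the naive power of periods, up to $\Q(\eta)^{\times}$. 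The paper's Proposition \ref{period_iso} shows that it is $\Omega_+^{-d(\rho)_+}\Omega_-^{-d(\rho)_-}\det\bigl(\sum_{g}g(\alpha_0)\rho_{\eta}(g^{-1})\bigr)$ up to $\zeta(\Q[G])$, i.e.\ it carries a resolvent (Gauss-sum/discriminant) factor which is genuinely irrational: in the example it contributes $\sqrt{229}$ for $\eta=\chi,\psi$, and indeed the naive ratio $L(E\otimes\chi,1)/\Omega=5/\sqrt{229}$ is not in $\Q(\chi)=\Q$. So "the classical ratio lies in $\Q(\eta)$" is false as written; the rationality statement only holds after the Deligne-type period is taken to include this resolvent, and pinning that factor down (via the choice of a $\Z[G]$-generator $\alpha_0$ of $\Ok$ and the identification of the integral de Rham lattice) is exactly the content of Proposition \ref{period_iso} that your outline omits.

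A second, smaller error is in your Artin-formalism aside: for $S_3$ one has $\mathrm{Ind}_{C_3}^{S_3}\mathbf{1}=\chi_0\oplus\chi$, not $\chi_0\oplus\psi$, so the real quadratic subfield only sees the abelian twists. The identity involving the two-dimensional character is $\mathrm{Ind}_{C_2}^{S_3}\mathbf{1}=\chi_0\oplus\psi$, i.e.\ $L(E\otimes\psi,s)=L(E/F,s)/L(E,s)$ with $F$ the non-normal cubic subfield; and even with the correct identity this does not yield rationality, since the analogous statement for $E/F$ is itself unproved --- which is consistent with the fact that both you and the paper ultimately fall back on the numerical value $L(E\otimes\psi,1)\approx 25\Omega^2/\sqrt{229}$ and the resulting identity (\ref{l_image}).
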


Now consider the reduced norm map
$$ \mathrm{nr}: K_1(\R[G]) \rightarrow \zeta(\R[G])^{\times}.$$
Note that the special value $L^*(h^1(E_K)(1)) \in \zeta(\R[G])^{\times}$.
By the weak approximation theorem, there exists $\lambda \in
\zeta(\Q[G])^{\times}$
such that $\lambda L^*(h^1(E_K)(1)) \in \mathrm{nr} (K_1(\R[G])$.  By the above
conjecture, we get an isomorphism
$$ \tau: \Xi \simeq \zeta(\Q[G])$$
which when tensored with $\R$ gives $-\mathrm{nr}^{-1}(\lambda
L^*(h^1(E_K)(1)) \cdot \vartheta_{\infty}$.  We therefore get
$$ R\Gamma_c(\Z_{S_l},V_l) \simeq \Xi \otimes \Q_l \simeq
\zeta(\Q_l[G]).$$
Denoting the composition by $\tau_l$ we have an element
$$ \xi_l := ([R\Gamma_c(\Z_{S_l},T_l)];\tau_l) \in K_0(\Z_l[G];\Q_l).$$

On the other hand, the local reduced norm map
$$ \mathrm{nr}_l: K_1(\Q_l[G]) \rightarrow \zeta(\Q_l[G])^{\times} $$
is an isomorphism.  Thus, via the connecting morphism $\delta_l : K_1(\Q_l[G])
\rightarrow K_0(\Z_l[G];Q_l)$ we get an element
$$ \theta_l := \delta_l(\mathrm{nr}_l^{-1}(\lambda)) \in K_0(\Z_l[G];\Q_l).$$

The integrality part of the equivariant conjecture says that

\begin{conjecture} \label{integrality_conjecture}
Assuming Conjecture \ref{rationality_conjecture}, one has that $ \xi_l - 
\theta_l$ vanishes in $K_0(\Z_l[G];\Q_l)$.
\end{conjecture}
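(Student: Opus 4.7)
The plan is to verify the vanishing of $\xi_l - \theta_l$ in $K_0(\Z_l[G];\Q_l)$ component by component in the Wedderburn decomposition of $\Q[G]$. Since $G \simeq S_3$, one has $\Q[G] \simeq \Q \oplus \Q \oplus M_2(\Q)$, corresponding to the trivial, sign, and standard $2$-dimensional representations. For the two abelian components the problem reduces to the commutative equivariant conjecture for $E$ and for its quadratic twist by the sign character, where modular symbols supply exact rational values of $L(E \otimes \chi, 1)$, so one can aim for exact equality. For the $M_2(\Q)$-component no closed-form formula for the $L$-value is available, which forces the numerical qualifier in the statement. The argument further splits over $l$: tame primes $l \nmid 6 \cdot 11 \cdot \mathrm{disc}(K)$ behave uniformly, while $l = 3$ (the only odd prime dividing $|G|$), $l = 11$ (bad reduction of $E$), and the odd primes dividing $\mathrm{disc}(K)$ require individual treatment.

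On the arithmetic side I would use the exact triangle (\ref{exact_t}) to reduce the study of $R\Gamma_c(\Z_{S_l}, T_l)$ to that of $R\Gamma_f(\Q, T_l)$ together with the local complexes $R\Gamma_f(\Q_p, T_l)$ for $p \in S_l$. The hypothesis $|\Sha(E/K)|_l = 1$ guarantees that the cohomology of $R\Gamma_f(\Q, T_l)$ is determined by $E(K) \otimes \Z_l$ and its dual, so the first concrete task is to compute $E(K)$ as a $\Z[G]$-module and evaluate the $G$-equivariant N\'eron--Tate regulator. Making the noncanonical choices alluded to in Section \ref{arithmetic} then assembles a well-defined class $[R\Gamma_c(\Z_{S_l}, T_l)] \in K_0(\Z_l[G];\Q_l)$. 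On the analytic side I would compute $L(E,1)$ and $L(E \otimes \mathrm{sgn}, 1)$ exactly via modular symbols and approximate $L(E \otimes \rho, 1)$ for the $2$-dimensional character $\rho$ using the method of Tollis \cite{td}. Combined with explicit evaluation of the periods on $E(\C)$ this produces the trivialization $\tau_l$ and thus the class $\xi_l$; the element $\theta_l$ is obtained by choosing a preimage $\lambda$ of $L^*(h^1(E_K)(1))$ under the global reduced norm and pushing it through $\delta_l$.

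The main obstacle will be the prime $l = 3$, where $l$ divides $|G|$ and $\Z_3[G]$ is not a product of maximal orders. In the $M_2(\Q_3)$-component the class of the perfect complex $R\Gamma_c(\Z_{S_l},T_l)$ must be tracked relative to a chosen maximal $\Z_3$-order and matched against $\delta_l(\mathrm{nr}_l^{-1}(\lambda))$ inside the corresponding summand of $K_0(\Z_3[G];\Q_3)$. Concretely this amounts to showing that a specific ratio, built from the numerically approximated $L$-value, the equivariant regulator, the equivariant period, and the Tamagawa factor at $p = 11$, lies in the image of the reduced norm, to a precision sufficient to exclude every other class in the local class group. For tame primes both $\xi_l$ and $\theta_l$ land in a relative class group controlled by standard Euler-factor compatibilities arising from local duality, so only the finite list of bad primes actually requires explicit numerical computation.
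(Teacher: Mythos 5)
Your overall skeleton (the exact triangle, modular symbols for the abelian twists, Dokchitser--Tollis numerics for the two-dimensional twist, a case split over $l$) does match the paper's strategy, but two of your key steps would fail as stated. First, the component-by-component verification in the Wedderburn decomposition $\Q[G]\simeq \Q\oplus\Q\oplus M_2(\Q)$, and in particular your plan at $l=3$ of tracking the class relative to a chosen maximal $\Z_3$-order, is strictly weaker than the conjecture: for $l\mid |G|$ the group $K_0(\Z_l[G];\Q_l)$ does not split along the Wedderburn components, and the kernel of the map to the relative $K$-group of a maximal order is exactly where the content of the statement lives. The paper works directly in $\Z_l[S_3]$, using explicit idempotents and explicit presentations of the finite modules $\Phi_q$, and the verification at $l=3$ comes down to congruences \emph{linking} the three components (the coefficients of $e_{\chi_0},e_\chi,e_\psi$ are forced to satisfy $a_{\chi_0}\equiv -a_\chi\equiv -a_\psi \bmod 3$ before one can conclude membership in the image of $K_1(\Z_3[S_3])$, after twisting by the image of $r$). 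A maximal-order or component-wise check cannot detect these congruences.

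Second, your list of primes requiring special treatment is misaimed. You single out $l=3$, $l=11$ and $l\mid \mathrm{disc}(K)$, but in this example $E(K)$ is finite (this follows from $L(E/K,1)\neq 0$ and Zhang's theorem), so there is no regulator to compute; the arithmetic side is carried entirely by finite modules: $E(K)_{\mathrm{tors}}\simeq\Z/5$, the Tamagawa number $c_{11}=5$, the local Euler factors, and the modules $\Phi_v$ measuring the deviation of the modified local conditions (needed to make the local complexes perfect over $\Z_l[G]$ -- a step your proposal does not address) from the Bloch--Kato ones. Consequently the genuinely delicate odd prime besides $3$ is $l=5$, where $L(E,1)=\Omega/5$, the torsion, $c_{11}$, and $\Phi_5$, $\Phi_{11}$ must all cancel against each other via explicit $\Z_5[S_3]$-resolutions; your ``tame primes $l\nmid 6\cdot 11\cdot\mathrm{disc}(K)$ behave uniformly'' would sweep $l=5$ into the uniform case, where a generic Euler-factor compatibility argument does not give the required cancellation. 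By contrast $l=11$ and $l=229$ fall into the same pattern as general $l>5$ (the relation $\epsilon(\Phi_l)\thicksim\sum_\eta |l^{\dim V_\eta^{I}}L_l(E\otimes\eta,1)^{-1}|_l e_\eta$ handles them), so the special attention you give them is not where the work is.
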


Our aim is to prescribe a method to verify the above conjecture for a given
elliptic curve and a finite Galois extension.

\section{Arithmetic calculations} \label{arithmetic}

Henceforth we work under the following hypotheses.
\begin{itemize}
\item $K/\Q$ is a finite Galois extension with Galois group $G$.
\item $\Ok$ is a free $\Z[G]$-module of rank 1.  By a theorem of Taylor,
this condition holds if and only if $K/\Q$ is tamely ramified and $G$
has no symplectic character (cf. \cite{taylor}).
\item $E(K)$ and $\Sha(E/K)$ are finite.
\item $(\mathrm{cond}(E),\mathrm{disc}(K)) = 1$.  This ensures that the
deRahm cohomology with integral coefficients is a free $\Z[G]$-module.
\end{itemize}

\subsection{Period isomorphism}

Since $E(K)$ is torsion, we have
\begin{eqnarray} \label{ell_xi}
\Xi = \Det^{-1} H^1(E_K(\C),\Q)^+ \otimes \Det H^0(E_K,\Omega^1_{E_K})^*.
\end{eqnarray}
Let $\mathcal{E}$ be a N\'{e}ron model for $E$ over $\Z$.  Let $\omega_0$ be
a generator of $ H^0(\mathcal{E}, \Omega_{\mathcal{E}}^1)$.  Then the image
of the map
\begin{eqnarray*}
H_1(E(\C),\Z) & \rightarrow & \C \\
\gamma & \mapsto & \int_{\gamma} \omega_0
\end{eqnarray*}
is a $\Z$-lattice in $\C$. This lattice is generated by $\Omega^+$ and
$\Omega^-$, the real and purely imaginary periods associated with $E$
respectively (cf. \cite{silverman}).

Let $\alpha_0$ be a $\Z[G]$-generator for $\Ok$.  Fix an element $c$
of $G$ corresponding to complex conjugation (so $c$ is trivial if $K/\Q$
is real).  For a complex representation $\rho : G \rightarrow GL(V)$, we let
$d(\rho)_+$ and $d(\rho)_-$ denote the dimension of the eigenspaces of $V$,
with eigenvalues 1 and $-1$ respectively, for the action of $c$.

\begin{proposition} \label{period_iso}
In the above setting, the image of $\Xi$ in $\Xi \otimes \R \simeq
\zeta(\R[G])$ under the isomorphism $\vartheta_{\infty}$ is given by
$$ \Omega_+^{-d(\rho)_+} \cdot \Omega_-^{-d(\rho)_-} \cdot \left( \det \left(
\sum_{g \in G} g(\alpha_0) \rho_{\eta}(g^{-1}) \right) \right)_{\eta \in
\widehat{G}} \cdot \zeta(\Q[G]),$$
where $\rho_{\eta}$ is the representation corresponding to the character
$\eta$.
\end{proposition}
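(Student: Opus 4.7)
Since $E(K)$ is finite, the determinant factors corresponding to $E(K)_\Q$ and $E(K)^*_\Q$ in $\Xi$ are canonically trivialised, and as in (\ref{ell_xi}) the computation is reduced to determining the image of $\Det^{-1} H_B^+ \otimes \Det H_{dR}^*$ under the $\R[G]$-isomorphism induced by the period map $\pi$ of (\ref{period_map}). My plan is to pick explicit $\Q[G]$-generators on both sides and compute the resulting determinant on each $\eta$-isotypic component via the concrete, idempotent-based construction of $\Det_{\Q[G]}$ given in Remark~2 of Section~\ref{virtual_objects}.

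\emph{De Rham side.} Because $(\mathrm{cond}(E),\mathrm{disc}(K))=1$, the integral de Rham cohomology is $\Z[G]$-free, and the identification $H^0(\mathcal{E}_K,\Omega^1_{\mathcal{E}_K})\simeq \Ok\otimes_{\Z}H^0(\mathcal{E},\Omega^1_{\mathcal{E}})$, combined with the hypothesis that $\Ok$ is $\Z[G]$-free on $\alpha_0$, exhibits $\alpha_0\omega_0$ as a $\Z[G]$-basis. Dualising gives a $\Z[G]$-basis of $H_{dR}^*$. The alternative ``geometric'' $\Q$-basis $\{g(\alpha_0)\omega_0\}_{g\in G}$ is more convenient for writing periods, and the change of basis from the $\Z[G]$-basis $\alpha_0\omega_0$ to this $\Q$-basis is given, character by character, precisely by the resolvent matrix $\sum_{g\in G}g(\alpha_0)\rho_\eta(g^{-1})$.

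\emph{Betti side.} Let $\gamma^\pm\in H_1(E(\C),\Z)$ be generators of the $\pm1$ eigenspaces for complex conjugation, normalised so that $\int_{\gamma^\pm}\omega_0=\Omega^\pm$, and let $\gamma^{\pm,*}$ denote the dual classes in $H^1(E(\C),\Q)$. Since $G$ acts transitively on $S_\infty(K)$ with stabiliser $G_v=\langle c\rangle$ (or trivial in the real case), the module $H_B^+$ is an induction from $G_v$ of the $G_v$-invariants of $H^1(vE(\C),2\pi i\Q)$, giving an explicit $\Q[G]$-basis built from translates of $2\pi i\,\gamma^{\pm,*}$. Under $\pi$ each such basis vector maps to an $\R$-linear combination of the geometric de Rham basis $\{g(\alpha_0)\omega_0\}_g$ whose coefficients are (up to $\Q^\times$) pure products of $\Omega^+$ and $\Omega^-$.

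\emph{Assembly.} Fix an irreducible character $\eta$ of $G$ and apply an indecomposable idempotent $e$ of $\C[G]$ lying over $\eta$. On the $e$-component the composition $(\text{Betti basis})\xrightarrow{\pi}(\text{geometric dR basis})\to(\text{basis }\alpha_0\omega_0)$ becomes a product of a diagonal factor carrying $d(\eta)_+$ copies of $\Omega^+$ and $d(\eta)_-$ copies of $\Omega^-$ with the inverse of the resolvent. Because $\Xi$ involves $\Det^{-1}$ on the Betti side and $\Det$ on the de Rham side, the overall reduced-norm contribution on the $\eta$-component is exactly
\[
\Omega_+^{-d(\eta)_+}\,\Omega_-^{-d(\eta)_-}\,\det\!\Bigl(\sum_{g\in G}g(\alpha_0)\rho_\eta(g^{-1})\Bigr),
\]
and the $\zeta(\Q[G])$-ambiguity disappears once we read the result modulo $\zeta(\Q[G])^\times$.

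The main technical obstacle I anticipate is the careful bookkeeping of signs and eigenspaces: one must keep track of the interaction of $c\in G$ acting on the set of infinite places, the intrinsic complex conjugation on each $vE(\C)$, and the sign introduced by the Tate twist $2\pi i$ on the coefficients. Once these signs are correctly aligned, the formula follows from a direct reduced-norm computation on each $\eta$-isotypic block.
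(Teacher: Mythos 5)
Your proposal is correct and follows essentially the same route as the paper: reduce to (\ref{ell_xi}), use the N\'eron-model/coprimality argument to exhibit $(\omega_0\otimes\alpha_0)^*$ as a $\Q[G]$-generator of $H_{dR}/\Fil^0$, take the explicit Betti generator built from $2\pi i$-valued duals of $\gamma_\pm$ at the identity place, and compute the reduced determinant of the period map on each $\eta$-isotypic block via the idempotent construction of $\Det_{\Q[G]}$. The sign and eigenspace bookkeeping you defer is exactly what the paper carries out by choosing the basis $e_{11},\ldots,e_{1d}$ of $e_{11}\Q[G]$ so that $\rho_\eta(c)$ is diagonal with the $+1$ entries first, whence the factors $\Omega_+^{-d(\rho)_+}\Omega_-^{-d(\rho)_-}$ and the resolvent determinant appear directly.
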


\begin{proof}
Recall that the isomorphism $\vartheta_{\infty}: \Xi \otimes_{\Q} \R \simeq
\zeta(\R[G])$ is constructed using the period map $\pi$ as in
(\ref{period_map}).  Therefore,
we shall first write down the Betti and deRham realizations, and the
corresponding period map between them.

Recall that $H_B^+ = \oplus_{v \in S_{\infty}} H^1(vE_K(\C),2\pi i
\Q)^{G_v}$.  We
shall identify each summand on the right hand side with the dual homology via
the isomorphism
$$ H^1(vE_K(\C),2 \pi i \Q) \simeq \Hom(H_1(vE_K(\C),\Q),2\pi i \Q).$$
Therefore, we have
$$ H_B^+ \simeq \Hom \left( \oplus_{v \in S_{\infty}} H_1(vE_K(\C),\Q),
(2 \pi i ) \Q \right)^{G_v}.$$
For a path $\gamma \in H_1(vE_K(\C),\Q)$, we let $\gamma^*$ denote the
corresponding element in the dual that maps $\gamma$ to 1 and the
orthogonal complement of $\gamma$ to 0.

Let $\gamma_+$ and $\gamma_-$ be $\Z$-generators of $H_1(E_K(\C),\Z)$ that
are eigenvectors, with eigenvalues 1 and $-1$ respectively, for the action
of the complex conjugation.  We also assume that these generators satisfy
$\int_{\gamma_+} \omega_0 = \Omega_+$ and $\int_{\gamma_-} \omega_0 =
\Omega_-$.  Note that, $\sigma \gamma_+$ and $\sigma \gamma_-$
are eigenvectors generating $H_1(vE_K(\C),\Z)$, for $v = v(\sigma)$.
Now define
$$\tilde{\gamma} : \oplus_{v \in S_{\infty}} H_1(vE_K(\C),\Q)  \rightarrow
(2 \pi i) \Q $$
by setting
\begin{eqnarray*}
\tilde{\gamma}(\sigma \gamma_*) & = & \left\{
\begin{array}{ll}
2 \pi i & \textrm{if $\sigma$=id, $*=\pm$} \\
0 & \textrm{otherwise.}
\end{array}
\right.
\end{eqnarray*}
It is easy to see that the restriction of $\tilde{\gamma}$ to $H_B^+$, which
we shall denote by the same symbol, generates $H_B^+$ as a $\Q[G]$-module.

Now, we consider the deRham realization.  Recall that by Serre duality one has
$$ H_{dR}/F^0 = H^1(E_K,\mathcal{O}_{E_K}) \simeq H^0(E_K,\Omega_{E_K}^1)^*$$
where $\Omega_{E_K}^1$ is the sheaf of differentials, and $*$ denotes the
dual.

\begin{lemma}
Let $\mathcal{E}$ and $\mathcal{E}_{\Ok}$ be the N\'{e}ron models for $E$
over $\Z$ and for $E_K$ over $\Ok$, respectively.  Suppose that the conductor
of $E$ and the discriminant of $K$ are relatively prime to each other.  Then
$$\mathcal{E}_{\Ok} \simeq \mathcal{E} \times_{\Spec(\Z)} \Spec(\Ok).$$
\end{lemma}

\begin{proof}
Note that the primes of bad reduction of $E$ and the primes that ramify in
$K/\Q$ do not intersect.  Therefore, $\mathcal{E} \times_{\Spec(\Z)} \Spec(\Ok)$
is an abelian scheme over $\Spec(\Ok)$.  The Lemma now follows from Corollary
1.4 in \cite{artin}.

\end{proof}

From the above Lemma we have
$$ H^0(E_K,\Omega_{E_K}^1) \simeq
H^0(\mathcal{E}_{\Ok},\Omega_{\mathcal{E}_{\Ok}}^1) \otimes_{\Ok} K
\simeq \left( H^0(\mathcal{E},\Omega_{\mathcal{E}}^1) \otimes_{\Z} \Ok
\right) \otimes_{\Ok} K.$$
Recall that $\omega_0$ is a generator for $H^0(\mathcal{E},
\Omega_{\mathcal{E}}^1)$.  Therefore,
$((\omega_0 \otimes \alpha_0) \otimes 1)^*$ is a $\Q[G]$-generator for
$H_{dR}/\Fil^0$.
 
After identifying the Betti and deRham realizations as above, the period map
$$ \pi : H_{B,\R}^+ \rightarrow H_{dR,\R}/\Fil^0$$
is given by
$$ \gamma^* \mapsto \left( \omega \otimes \alpha \mapsto \left( \int_{\gamma}
\omega \right)^{-1} \alpha \right), $$
for $\gamma \in \oplus_{v \in S_{\infty}} H_1(vE(\C),\Q)^{G_v}$, $\omega \in
H^0(\mathcal{E},\Omega_{\mathcal{E}}^1)$ and $\alpha \in K$.

Since $\Q[G]$ is semisimple, the determinant functor is given by the map
$\Det$ defined in Section \ref{virtual_objects}.  Let $\rho_{\eta}:G
\rightarrow M_d(F)$ be an irreducible complex representation of $G$ of
dimension $d$, with character $\eta$.  Here $F$ is a finite extension of
$\Q$.  By choosing a suitable basis, we assume that $\rho(c)$ is diagonal
with the first $d(\rho)_+$ diagonal entries being 1 and the rest of the
diagonal entries being $-1$.  We shall now compute the $\eta$-component of
the generator $(\tilde{\gamma}) \otimes ((\omega_0 \otimes \alpha_0)
\otimes 1)^*$ of $\Xi \otimes \R$.

Let $e_{11}$ be the indecomposable idempotent of $\Q[G]$ such that
$\rho(e_{11}) = (b_{ij})$ with $b_{ij} = 1$ for $(i,j) = (1,1)$ and $b_{ij}
= 0$ otherwise.  Let $e_{11}, e_{12}, ..., e_{1d}$ be an $F$-basis for
$e_{11}\Q[G]$.  We choose this basis such that
for $1 \leq r \leq d$, one has that $\rho_{\eta}(e_{1r})$ is the matrix
$(b_{ij})$ with $b_{ij} =1$ of $(i,j) = (1,r)$ and $b_{ij} = 0$ otherwise.

For $1 \leq k \leq d$ and $g \in G$, we have
\begin{eqnarray*}
\left( \pi(e_{1k}\tilde{\gamma}) \right) (\omega_0 \otimes g(\alpha_0))
& = & \Omega_{s(k)}^{-1} e_{1k}g(\alpha_0),
\end{eqnarray*}
where $s(k) = +$ if $1 \leq k \leq d(\rho)^+$ and $s(k) = -$ otherwise.
Therefore it follows that under the period isomorphism $\pi$, the image
of $e_{1k}\tilde{\gamma}$ is
$$\left( \Omega_{s(k)}^{-1} e_{1k} \sum_{g \in G} g(\alpha_0) g^{-1} \right)
(\omega_0 \otimes \alpha_0)^*.$$

Now, for an element $\theta \in \Q[G]$, one has that
$$ e_{1k} \theta = \sum_{r=1}^d a_{kr} e_{1r} $$
where $\rho_{\eta}(\theta) = (a_{ij})$.
Thus it follows that the image of $\wedge_{k=1}^d e_{1k}\tilde{\gamma}$
under the period isomorphism is $$ \Omega_+^{-d(\rho)_+} \cdot
\Omega_-^{-d(\rho)_-} \cdot \det \left( \sum_{g \in G}
g(\alpha_0) \rho_{\eta}(g^{-1}) \right) \cdot \wedge_{k=1}^d
e_{1k} (\omega_0 \otimes \alpha_0)^*.$$
The proposition now follows.
\end{proof}

\subsection{The cohomology of finite complexes} \label{finite_complexes}
We shall now define the complexes $R\Gamma_f$ and the exact triangle
(\ref{exact_t}). Let $T_l(E) := \varprojlim_n E(\bar{\Q})[l^n]$ and
$V_l(E) := T_l(E) \otimes_{\Z_l} \Q_l$.  Recall that $T_l :=
\mathrm{Ind}_K^{\Q} \left( T_l(E) \right)$ and $V_l :=
T_l \otimes_{\Z_l} \Q_l$ is the $l$-adic realization of $M = h^1(E_K)(1)$.
Note that $T_l$ is a Galois stable $\Z_l$-lattice in $V_l$.

Let $M_K := h^1(E_K)(1)$, regarded as a motive defined over $K$.  Then
$H_l(M_K) = V_l(E)$ and by Proposition 4.1 of \cite{bf} we have
$$ R\Gamma_c(\Z_{S_l},V_l) \simeq R\Gamma_c(\mathcal{O}_{K,S_l(K)},V_l(E)).$$
We will therefore consider the following exact triangle instead of
(\ref{exact_t}):
\begin{eqnarray} \label{exact_tk}
R\Gamma_c(\mathcal{O}_{K,S_l(K)},T_l(E)) \rightarrow R\Gamma_f(K,T_l(E))
\rightarrow \oplus_{v \in S_l(K)} R\Gamma_f(K_v,T_l(E)).
\end{eqnarray}
We can then set $R\Gamma(\Q,T_l) := R\Gamma(K,T_l(E))$ and
$R\Gamma_f(\Q_p,T_l) := \oplus_{v \mid p} R\Gamma_f(K_v,T_l(E))$ for
any $p \in S_l$.

Our aim is to define complexes $R\Gamma_f(K_v,T_l(E))$ for $v \in S_l(K)$
such that they satisfy the following:
\begin{itemize}
\item there is a map $R\Gamma_f(K_v,T_l(E)) \rightarrow R\Gamma(K_v,T_l(E))$
of complexes which upon tensoring
with $\Q_l$ becomes quasi-isomorphic to $R\Gamma_f(K_v,V_l(E)) \subseteq
R\Gamma(K_v,V_l(E))$;
\item $R\Gamma_f(K_p,T_l(E)) := \oplus_{v \mid p} R\Gamma_f(K_v,T_l(E))$ is  
a perfect complex of $\Z_l[G]$-modules.
\end{itemize}
We shall denote by
$R\Gamma_f(K_v,T_l)_{BK}$ the finite integral complexes defined by
Bloch-Kato (cf. \cite{bk}).  Note that these finite complexes in general
satisfy only the first condition and therefore doesn't fit our framework.  

\subsubsection{Case $v \in S_{\infty}(K)$}  In this case, one has that
$$ R\Gamma_f(K_v,V_l(E)) = R\Gamma(K_v,V_l(E)),$$
which is the standard complex of continuous cochains.  We therefore define
$R\Gamma_f(K_v,T_l(E))$ to be the complex $H^0(K_v,T_l(E))$ (concentrated
in degree zero).  Note that $R\Gamma_f(K_{\infty},T_l(E))$ is free of rank 1
over $\Z_l[G]$.  Further, under Grothendieck's comparison isomorphism,
$\oplus_{v \mid \infty} H^0(K_v,T_l(E))$ is mapped onto $\oplus_{v \in
S_{\infty}(K)} H^1(vE(\C),\Z)\otimes_{\Z} \Z_l$.

\subsubsection{Case $v \in S_f(K)$}
Note that for $v \in S_f(K)$, the complex $R\Gamma_f(K_v,V_l(E))$ is
\begin{eqnarray} \label{rgammaf}
V_l(E)^{I_v} \stackrel{1 - \mathrm{Fr}_v}{\longrightarrow} V_l(E)^{I_v},
\end{eqnarray}
where $I_v$ is the inertia group, $\mathrm{Fr}_v$ is the Frobenuis at $v$
and the modules are placed in degrees 0 and 1.  If $l$ does not divide the
order of the image of $I_v$ in $G$ then
$$ T_l(E)^{I_v} \stackrel{1 - \mathrm{Fr}_v}{\longrightarrow} T_l(E)^{I_v} $$
is a good choice for $R\Gamma_f(K_v,T_l(E))$.  The complex
(\ref{rgammaf}) has trivial cohomology and therefore we can
$R\Gamma_f(K_v,T_l(E))$ to be the zero complex whenever $l$ divides the
order of the image of $I_v$ in $G$.

\subsubsection{Case $v \mid l$}  In this case, one has
$$H^1_f(K_v,V_l(E)) \simeq (\varprojlim_n E(K_v)/l^n) \otimes_{\Z_l} \Q_l.$$
We first note that to define $R\Gamma_f(K_v,T_l(E))$ it is enough to define
a Galois stable $H_f^1(K_v,T_l(E)) \subset H_f^1(K_v,V_l(E))$ (cf.
\cite{kato}).  The classical definition due to Bloch-Kato (cf. \cite{bk}) is
to take this cohomology to be
$$\varprojlim_n E(K_v)/l^n.$$
However, this doesn't work for our purpose since this group might not have
finite projective dimension over $\Z_l[G]$.

So we consider the short exact sequence
$$ 0 \rightarrow E_1(K_v) \rightarrow E_0(K_v) \rightarrow E(k_v) \rightarrow
0,$$
where $E_0(K_v)$ is the group of nonsingular points, $E_1(K_v)$ is the
subgroup of points that are trivial modulo the maximal ideal $\mathfrak{m}_v$
and $k_v$ is the residue field.  One has an isomorphism between $E_1(K_v)$
and the formal group $\widehat{E}(\mathfrak{m}_v)$.  We let $n_v$ to be the
least positive integer such that $\widehat{E}(\mathfrak{m}_v^{n_v}) \simeq
\mathfrak{m}_v^{n_v}$ (see \cite{silverman} for details) and such that
the ramification index $e(K_v/\Q_l)$ divides $n_v$.

We now define for $v | l$
$$H_f^1(K_v,T_l(E)) := im(\widehat{E}(\mathfrak{m}_v^{n_v})),$$
where the right hand side is the image of the $l$-adic completion of the formal
group in $\varprojlim_n E(K_v)/l^n$.

\noindent {\bf Remark.}  Note that for a tamely ramified extension $K_v/\Q_l$
with ramification index $e := e(K_v/\Q_l)$, and for a positive integer $r$,
one has that
$\mathfrak{m}_v^{er} = l^r \mathcal{O}_{K_v}
\simeq \mathcal{O}_{K_v} \simeq \Z_l[\mathrm{Gal}(K_v/\Q_l)]$, as
$\Z_l[\mathrm{Gal}(K_v/\Q_l)]$-modules.  Therefore,
the above definition of $R\Gamma_f$ ensures that $R\Gamma(K_l,T_l(E))$ is
a free $\Z_l[G]$-module of rank 1.

Thus we have the following.
\begin{lemma} \label{finite_definition}
The below definitions of $R\Gamma_f(K_v,T_l(E))$ for $v \in S_l(K)$ satisfy
the conditions that
\begin{itemize}
\item there exists a map $R\Gamma_f(K_v,T_l(E)) \rightarrow R\Gamma(K_v,V_l(E))$
of complexes which upon tensoring with $\Q_l$ becomes quasi-isomorphic to
$R\Gamma_f(K_v,V_l(E)) \rightarrow R\Gamma(K_v,V_l(E))$;
\item $R\Gamma_f(K_p,T_l(E))$ is a perfect $\Z_l[G]$-complex.
\end{itemize}
\begin{enumerate}
\item[(i)] If $v \in S_{\infty}(K)$ then $R\Gamma_f(K_v,T_l(E)) :=
H^0(K_v,T_l(E))$ (placed in degree 0).
\item[(ii)] If $v \in S_f(K)$ and $l$ does not divide the order of the image
of $I_v$ in $G$ then define $R\Gamma_f(K_v,T_l(E))$ to be the complex
$$ T_l(E)^{I_v} \stackrel{1-Fr^{-1}_v}{\longrightarrow} T_l(E)^{I_v}.$$
\item[(iii)] If $v \in S_f(K)$ and $l$ divides the order of the image of
$I_v$ in $G$ then define $R\Gamma_f(K_v,T_l(E))$ to be the zero complex.
\item[(iv)] If $v \mid l$ then $R\Gamma_f(K_v,T_l(E)) :=
\mathrm{im}(\widehat{E}(\mathfrak{m}_v^{n_v}))$, where $\mathfrak{m}_v$ is the
maximal ideal in $\mathcal{O}_{K_v}$ and $n_v$ is smallest positive integer
divisible by the ramification index $e(K_v/\Q_l)$ and such that
$\widehat{E}(\mathfrak{m}_v^{n_v}) \simeq \mathfrak{m}_v^{n_v}$.
\end{enumerate}
\end{lemma}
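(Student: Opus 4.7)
The plan is to check the two conditions locally: for each $v \in S_l(K)$ one verifies condition (1), and perfection of $\oplus_{v \mid p} R\Gamma_f(K_v, T_l(E))$ reduces via the identification $\oplus_{v \mid p} R\Gamma_f(K_v, T_l(E)) \simeq \mathrm{Ind}_{G_{\mathfrak{p}}}^{G} R\Gamma_f(K_{\mathfrak{p}}, T_l(E))$ (for a chosen $\mathfrak{p} \mid p$ with decomposition group $G_{\mathfrak{p}} \subseteq G$) to perfection over $\Z_l[G_{\mathfrak{p}}]$, since induction preserves projectivity.

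For case (i), since $l$ is odd and $[K_v : \R] \le 2$, higher continuous cohomology of $G_v$ on $V_l(E)$ vanishes, so $R\Gamma(K_v, V_l(E))$ is concentrated in degree zero. I would identify the natural map $T_l(E)^{G_v} \to R\Gamma(K_v, T_l(E))$ with the required map verifying condition (1), and note that the induced sum over $v \mid \infty$ is $\Z_l[G]$-free of rank one, as the text records.

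For case (ii), the assumption that $l$ does not divide the order of the image of $I_v$ in $G$ makes $T_l(E)^{I_v}$ projective over the appropriate quotient of $\Z_l[G_{\mathfrak{p}}]$, so the two-term complex is $\Z_l[G_{\mathfrak{p}}]$-perfect; tensoring with $\Q_l$ recovers (\ref{rgammaf}). For case (iii), the zero complex is trivially perfect, so condition (1) reduces to the acyclicity of $R\Gamma_f(K_v, V_l(E))$. Since $\mathrm{cond}(E)$ and $\mathrm{disc}(K)$ are coprime, $E$ has good reduction at the rational prime below $v$, so $I_v$ acts trivially on $V_l(E)$ and the Frobenius eigenvalues on $V_l(E)$ are Weil numbers of weight one, forcing $1 - \mathrm{Fr}_v^{-1}$ to be invertible on $V_l(E)^{I_v} = V_l(E)$.

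The hard part is case (iv). Here the choice of $n_v$ is made precisely so that the formal logarithm yields an isomorphism $\widehat{E}(\mathfrak{m}_v^{n_v}) \simeq \mathfrak{m}_v^{n_v}$ of $\Z_l[G_{\mathfrak{p}}]$-modules, while $e(K_v/\Q_l) \mid n_v$ gives $\mathfrak{m}_v^{n_v} = l^{n_v/e} \mathcal{O}_{K_v} \simeq \mathcal{O}_{K_v}$. Noether's theorem for the tamely ramified $K_v/\Q_l$ then makes this $\Z_l[G_{\mathfrak{p}}]$-free of rank one; inducing up produces a free $\Z_l[G]$-module of rank one, which is the content of the remark preceding the lemma. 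For condition (1), $\widehat{E}(\mathfrak{m}_v^{n_v})$ has finite index in $\widehat{E}(\mathfrak{m}_v) = E_1(K_v)$, and $E_1(K_v)$ has finite index in $E(K_v)$, so its image in $\varprojlim_n E(K_v)/l^n$ spans the same $\Q_l$-subspace as the full image of $E(K_v)$, recovering $H_f^1(K_v, V_l(E))$ after tensoring with $\Q_l$.
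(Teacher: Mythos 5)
Your proposal is correct and follows essentially the same route as the paper's own (largely implicit) justification, which consists of the discussion preceding the lemma: the same case-by-case treatment — vanishing of higher archimedean cohomology with $\Q_l$-coefficients, acyclicity of the unramified two-term complex at finite places, the prime-to-$l$ condition on the image of inertia for perfectness, and the formal logarithm combined with tameness (Noether's criterion, guaranteed by the hypothesis that $\Ok$ is free over $\Z[G]$) giving $\widehat{E}(\mathfrak{m}_v^{n_v}) \simeq \mathcal{O}_{K_v}$ free of rank one at $v \mid l$ — together with induction from decomposition groups and the finite-index observation for the comparison after tensoring with $\Q_l$. Since the paper offers no further proof beyond that discussion, your write-up is at a comparable (in places slightly greater) level of detail and introduces no new method.
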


\begin{remarks}
\begin{enumerate}
\item For $v \in S_f(K)$, the above definition of $R\Gamma_f(K_v,T_l(E))$
differs from the classical definition by a factor of $|c_vL_v(E,1)^{-1}|_l$
or $|c_v|_l$ depending on whether or not $l$ divides the order of the image
of $I_v$ in $G$.  Here $c_v$ is the order of the group of components.
\item For $v|l$ the above definition differs by the classical definition by
$$ [\varprojlim_n E(K_v)/l^n : H_f^1(K_v,T_l)] = |c_v|E(k_v)||_l \cdot
|k_v|^{n_v - 1} = |c_vL_v(E,1)^{-1}|_l \cdot |k_v|^{n_v},$$
where $k_v$ is the residue field at $v$.
\item For $v \mid l$, note that the following diagram is commutative:
\begin{displaymath}
\xymatrix{
E(K_v)\otimes_{\Z_l} \Q_l & Tan(E_{K_v}) \ar[l]_{exp} \\
\widehat{E}(\mathfrak{m}_v^{n_v}) \otimes_{\Z_l} \Q_l \ar[u] \ar[r]^{log} &
K_v \ar[u]^{\cdot \omega_0^*}
}
\end{displaymath}
We shall therefore compute the image of $\oplus_{v|l} H_f^1(K_v,T_l(E))$
under the comparison isomorphism via the logarithm map.
\end{enumerate}
\end{remarks}

We can now define the global finite complex as in \cite{bf}.  To be precise,
we let
$$ R\Gamma_{/f}(K_v,T_l(E)) := \mathrm{Cone}(R\Gamma_f(K_v,T_l(E))
\rightarrow R\Gamma(K_v,T_l(E))) $$
and
$$ R\Gamma_f(K,T_l(E)) := \mathrm{Cone}(R\Gamma(\mathcal{O}_{K,S_l},T_l(E))
\rightarrow \oplus_{v \in S} R\Gamma_{/f}(K_v,T_l(E)))[-1].$$
The cohomology of $R\Gamma_f(K,T_l(E))$ is given by the following
lemma.

\begin{lemma} \label{global_f}
If $\Sha(E(K))$ is finite then
\begin{eqnarray*}
H_f^0(K,T_l(E)) = 0, & H^3(K,T_l(E)) \simeq
\mathrm{Hom}_{\Z} (E(K)_{l^{\infty}}, \Q_l/\Z_l),
\end{eqnarray*}
and there are exact sequences of $\Z_l[G]$-modules
$$ 0 \rightarrow H_f^1(K,T_l(E)) \rightarrow E(K)\otimes_{\Z} \Z_l
\rightarrow \oplus_{v \in S_{l,f}(K)} \Phi_v $$
$$
\rightarrow H_f^2(K,T_l(E)) \rightarrow H_f^2(K,T_l(E))_{BK} \rightarrow 0,$$
$$ 0 \rightarrow \Sha(E(K))_{l^{\infty}} \rightarrow H_f^2(K,T_l(E))_{BK}
\rightarrow \mathrm{Hom}_{\Z}(E(K), \Z_l) \rightarrow 0,$$
where,
\begin{displaymath} \Phi_v :=
\left( \varprojlim_n \frac{E(K_v)}{l^n} \right)/H^1_f(K_v,T_l(E)).
\end{displaymath}
\end{lemma}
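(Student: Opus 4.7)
The plan is to take the long exact sequence in cohomology of the defining triangle
$$R\Gamma_f(K,T_l(E)) \to R\Gamma(\mathcal{O}_{K,S_l}, T_l(E)) \to \bigoplus_{v \in S} R\Gamma_{/f}(K_v, T_l(E))$$
and identify each term using Kummer theory and Poitou-Tate duality, then compare the author's modified finite complex with the classical Bloch-Kato one to pull out the $\Phi_v$ terms.

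First I would compute the global cohomology. Taking the inverse limit of the Kummer short exact sequences $0 \to E(K)/l^n \to H^1(\mathcal{O}_{K,S_l}, E[l^n]) \to H^1(\mathcal{O}_{K,S_l}, E)[l^n] \to 0$ yields $H^0(\mathcal{O}_{K,S_l}, T_l(E)) = T_l E(K) = 0$ (since $E(K)$ is finite) and a short exact sequence with $E(K) \otimes \Z_l$ on the left and $T_l H^1(\mathcal{O}_{K,S_l}, E)$ on the right; the finiteness of $\Sha(E/K)$ forces the Tate module of this global $H^1$ to vanish, so $H^1(\mathcal{O}_{K,S_l}, T_l(E)) \simeq E(K) \otimes \Z_l$. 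For degrees 2 and 3, Poitou-Tate duality together with the Weil pairing identification $T_l(E)^\vee(1) \simeq T_l(E)$ gives $H^3(\mathcal{O}_{K,S_l}, T_l(E)) \simeq \mathrm{Hom}(E(K)_{l^\infty}, \Q_l/\Z_l)$ and expresses $H^2$ in terms of the Pontryagin dual of a Selmer-type group, from which the standard global duality argument produces the claimed sequence
$$0 \to \Sha(E/K)_{l^\infty} \to H^2_f(K,T_l(E))_{BK} \to \mathrm{Hom}(E(K),\Z_l) \to 0.$$

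Next I would analyze the local quotient complexes $R\Gamma_{/f}(K_v, T_l(E))$ case by case, using the definitions in Lemma \ref{finite_definition}. For $v \in S_\infty(K)$, since $l$ is odd the relevant archimedean local cohomology vanishes and these places contribute nothing. For $v \in S_f(K)$ the classical Bloch-Kato finite complex is quasi-isomorphic to the author's choice in the triangulated category (the modifications change only the $\Z_l[G]$-module structure and contribute only to Euler characteristics, not to the exact sequence in the lemma statement). The key case is $v \mid l$: here the author replaces $\varprojlim_n E(K_v)/l^n$ by its subgroup $\mathrm{im}(\widehat{E}(\mathfrak{m}_v^{n_v}))$, and the cokernel is by definition $\Phi_v$. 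Comparing the defining triangle for $R\Gamma_f$ with the analogous triangle for the Bloch-Kato version gives a map of long exact sequences whose failure is concentrated in degree $1$ of the local term, where it is $\bigoplus_v \Phi_v$.

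Assembling the resulting diagram produces the five-term exact sequence
$$0 \to H^1_f(K, T_l(E)) \to E(K) \otimes \Z_l \to \bigoplus_{v \in S_{l,f}(K)} \Phi_v \to H^2_f(K, T_l(E)) \to H^2_f(K, T_l(E))_{BK} \to 0,$$
and the Bloch-Kato filtration on $H^2_f(K,T_l(E))_{BK}$ above. The main obstacle I anticipate is bookkeeping around $v \mid l$: one must check that the map $E(K)\otimes\Z_l \to \bigoplus_v \Phi_v$ factoring through localization is exactly the connecting homomorphism coming from the triangle (rather than off by a shift or a sign), and that Poitou-Tate duality is being applied with compatible normalizations on both sides of the comparison with the Bloch-Kato complex; once that diagram chase is set up carefully the sequences drop out mechanically.
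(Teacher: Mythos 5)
Your overall strategy---compare the modified finite complex with the Bloch--Kato one and read off the $\Phi_v$ as the local discrepancies, then quote the classical computation---is in the same spirit as the paper (which puts the two triangles over the common $R\Gamma_c(\mathcal{O}_{K,S_l(K)},T_l(E))$, uses the octahedral axiom to produce complexes $W_v$ with $H^1(W_v)=\Phi_v$, and then applies Bloch--Kato's description of $H^i_f(K,T_l(E))_{BK}$ to the long exact sequence of the comparison triangle). But two of your intermediate identifications are wrong, and the first would derail the computation if actually used. The finiteness of $\Sha(E/K)$ kills the Tate module of the \emph{Selmer} group, i.e. it gives $H^1_f(K,T_l(E))_{BK}\simeq E(K)\otimes_{\Z}\Z_l$; it does not kill $T_l H^1(\mathcal{O}_{K,S_l},\mathcal{E})$, and $H^1(\mathcal{O}_{K,S_l},T_l(E))$ is \emph{not} $E(K)\otimes\Z_l$. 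Since no local conditions are imposed at the places of $S_l(K)$, the Cassels--Poitou--Tate sequence (or the global Euler characteristic formula applied to $V_l(E)$) shows that in the situation of the paper ($E(K)$ and $\Sha$ finite, $K$ totally real) one has $\dim_{\Q_l}H^1(G_{K,S_l},V_l(E))=[K:\Q]$, so the $l$-primary part of $H^1(\mathcal{O}_{K,S_l},\mathcal{E})$ has positive corank and its Tate module is nonzero. Similarly, $H^3(G_{K,S_l},T_l(E))=0$ for odd $l$ (cohomological dimension $2$); the group $\mathrm{Hom}_{\Z}(E(K)_{l^{\infty}},\Q_l/\Z_l)$ in the lemma is the degree-$3$ cohomology of the Selmer complex, supplied by global duality/Bloch--Kato, not by $H^3$ over $\mathcal{O}_{K,S_l}$.

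The second problem is your claim that for $v\in S_f(K)$, $v\nmid l$, the complex of Lemma \ref{finite_definition} is quasi-isomorphic to the Bloch--Kato one. It is not: as the Remark following Lemma \ref{finite_definition} records, the two differ by finite groups whose orders involve $c_v$ and $L_v(E,1)$, and in case (iii) the modified complex is the zero complex while the Bloch--Kato one need not be acyclic. This is precisely why the middle term of the five-term sequence is $\oplus_{v\in S_{l,f}(K)}\Phi_v$, over \emph{all} finite places of $S_l(K)$, and not only over $v\mid l$; with your assumption the contributions of the bad and ramified places would be lost. Once these two points are corrected---take the Bloch--Kato Selmer cohomology (not $H^i(\mathcal{O}_{K,S_l},T_l(E))$) as the global input, and keep the local discrepancy $W_v$ at every finite $v\in S_l(K)$---your final assembly step becomes exactly the paper's octahedral-axiom argument, and the stated sequences do follow.
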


\begin{proof}
By the above definitions of the finite complexes one has a commutative
diagram
\begin{displaymath}
\xymatrix{
R\Gamma_c(\mathcal{O}_{K,S_l(K)}, T_l(E)) \ar[r] \ar@{=}[d] &
R\Gamma_f(K, T_l(E)) \ar[r] \ar[d] & \oplus_{v \in S}
R\Gamma_f(K_v, T_l(E)) \ar[d] \\
R\Gamma_c(\mathcal{O}_{K,S_l(K)}, T_l(E)) \ar[r] \ar[d] &
R\Gamma_f(K, T_l(E))_{BK} \ar[r] \ar@{-->}[d] & \oplus_{v \in S}
R\Gamma_f(K_v, T_l(E))_{BK} \ar@{-->}[d] \\
0 \ar[r] & \oplus_{v \in S_{l,f}(K)} W_v \ar@{=}[r] & \oplus_{v \in
S_{l,f}(K)} W_v }
\end{displaymath}
with rows being distinguished triangles.  It follows from the octahedral
axiom that there exists complexes $W_v$ such that the columns in the above
diagram are also distinguished.  We note that $\Phi_v = H^1(W_v)$.  The lemma
now follows from the results of \cite{bk} on the classical finite cohomology
of $R\Gamma_f(K,T_l(E))_{BK}$ and from the long exact sequences arising from
the distinguished columns.
\end{proof}

\begin{corollary} \label{global_finite}
If $E(K)$ is torsion and $\Sha(E(K))$ is finite, then one has
$$ H^0_f(K,T_l(E)) = H^1_f(K,T_l(E)) = 0, H^3_f(K,T_l(E)) \simeq
\mathrm{Hom}_{\Z} (E(K)_{l^{\infty}},\Q_l/\Z_l),$$
and the following sequence is exact:
$$ 0 \rightarrow E(K)_{l^{\infty}} \rightarrow \oplus_{v \in S_{l,f}(K)} \Phi_v
\rightarrow H^2_f(K,T_l(E)) \rightarrow \Sha(E(K))_{l^{\infty}} \rightarrow
0.$$
\end{corollary}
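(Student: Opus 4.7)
The plan is to specialize Lemma \ref{global_f} using the additional hypothesis that $E(K)$ is torsion, and to read off the corollary directly from the two exact sequences of the lemma. First I would record two immediate simplifications: since $E(K)$ is torsion (hence finite), $E(K) \otimes_\Z \Z_l = E(K)_{l^\infty}$ and $\Hom_\Z(E(K),\Z_l) = 0$. Substituting the second identity into the second exact sequence of Lemma \ref{global_f} yields an isomorphism $H_f^2(K,T_l(E))_{BK} \simeq \Sha(E(K))_{l^\infty}$. Plugging both of these into the first exact sequence of the lemma then produces the five-term exact sequence
$$0 \to H_f^1(K,T_l(E)) \to E(K)_{l^\infty} \to \oplus_{v \in S_{l,f}(K)} \Phi_v \to H_f^2(K,T_l(E)) \to \Sha(E(K))_{l^\infty} \to 0.$$
The statements $H_f^0(K,T_l(E)) = 0$ and $H^3_f(K,T_l(E)) \simeq \Hom_\Z(E(K)_{l^\infty},\Q_l/\Z_l)$ are inherited unchanged from Lemma \ref{global_f}, so only the vanishing of $H_f^1(K,T_l(E))$ remains to be established.

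For that vanishing I would show that the localization map $E(K)_{l^\infty} \to \oplus_{v \in S_{l,f}(K)} \Phi_v$ is injective by testing at a single place $v$ above $l$. The torsion subgroup $E(K)_{l^\infty}$ injects into $E(K_v)_{l^\infty}$ and further into $\varprojlim_n E(K_v)/l^n$, since $E(K_v)_{l^\infty}$ has bounded exponent and so its tower of quotients stabilizes. The local subgroup $H_f^1(K_v,T_l(E)) = \mathrm{im}(\widehat{E}(\mathfrak{m}_v^{n_v}))$ is the image of the formal group $\widehat{E}(\mathfrak{m}_v^{n_v}) \simeq \mathfrak{m}_v^{n_v}$, which is torsion-free by the choice of $n_v$. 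Hence the image of $E(K)_{l^\infty}$ inside $\varprojlim_n E(K_v)/l^n$ meets $H_f^1(K_v,T_l(E))$ trivially, so it injects into $\Phi_v$ and a fortiori into the direct sum.

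The main obstacle I anticipate is this last injectivity step: verifying cleanly that the torsion of $E(K)$ survives in the $l$-adic completion $\varprojlim_n E(K_v)/l^n$ and is genuinely transverse to the torsion-free formal group image there. Everything else amounts to bookkeeping in the long exact sequences furnished by Lemma \ref{global_f}.
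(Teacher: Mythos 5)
Your proposal is correct and is essentially the derivation the paper intends: the corollary is stated as an immediate specialization of Lemma \ref{global_f}, and your bookkeeping (for torsion $E(K)$ one has $E(K)\otimes_{\Z}\Z_l\simeq E(K)_{l^{\infty}}$ and $\Hom_{\Z}(E(K),\Z_l)=0$, hence $H^2_f(K,T_l(E))_{BK}\simeq\Sha(E(K))_{l^{\infty}}$, which collapses the two sequences of the lemma into the stated five-term sequence) is exactly that specialization. The only point the paper leaves implicit is the vanishing of $H^1_f(K,T_l(E))$, i.e.\ the injectivity of $E(K)_{l^{\infty}}\rightarrow\oplus_{v\in S_{l,f}(K)}\Phi_v$, and your argument via a single place $v\mid l$ is the right way to supply it. One small repair: the torsion-freeness you need is that of $H^1_f(K_v,T_l(E))=\mathrm{im}(\widehat{E}(\mathfrak{m}_v^{n_v}))$ itself, and an image of a torsion-free group need not be torsion-free; here it is, because the map $\widehat{E}(\mathfrak{m}_v^{n_v})\rightarrow\varprojlim_n E(K_v)/l^n$ is injective --- any element of its kernel lies in $\bigcap_n l^nE(K_v)$, which is the prime-to-$l$ torsion of $E(K_v)\simeq\Z_l^{[K_v:\Q_l]}\times(\text{finite})$, whereas $\widehat{E}(\mathfrak{m}_v^{n_v})\simeq\mathfrak{m}_v^{n_v}$ has no torsion. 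With that observation, a nonzero element of the image of $E(K)_{l^{\infty}}$ in $\varprojlim_n E(K_v)/l^n$ is a nonzero torsion element and so cannot lie in the torsion-free subgroup $H^1_f(K_v,T_l(E))$, giving the injectivity into $\Phi_v$ and hence, by exactness of the lemma's sequence, $H^1_f(K,T_l(E))=0$; the rest of your write-up needs no change.
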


\section{Special values of $L$-functions}

\subsection{Modular symbols} \label{modular_symbols}

Let $\mathcal{H} = \{z \in \C | Im(z) > 0 \}$ denote the upper half plane
and $\mathcal{H}^* = \mathcal{H} \cup \Q \cup \{ \infty \}$ denote the
extended upper half plane.  Let $\Gamma$ be a congruence subgroup and let
$X_{\Gamma} : = \Gamma\backslash \mathcal{H}^*$ be the corresponding modular
curve.  For cusps $\alpha, \beta \in \Q \cup \{\infty\}$ consider a smooth
path in $\mathcal{H}^*$ from $\alpha$ to $\beta$.  Let $\{ \alpha, \beta
\}_{\Gamma}$ denote the image of the path in $X_{\Gamma}$.  The Manin-Drinfeld
theorem says that for any cusps $\alpha, \beta \in \Q \cup \{\infty\}$ one
has $\{ \alpha, \beta \}_{\Gamma} \in H_1(X_{\Gamma},\Q)$.  For a cusp form
$f \in S_2(\Gamma)$ we let
$$ \left< \{\alpha, \beta\}_{\Gamma}, f \right> : = 2 \pi i
\int_{\alpha}^{\beta} f(z) dz.$$
Note that the element $\{ \alpha, \beta \}_{\Gamma} \in H_1(X_{\Gamma},\Q)$
and the above integral are both independent of the path chosen.

Using Mellin transformation one can deduce the following:

\begin{proposition}
Let $N$ be a prime, $\Gamma = \Gamma_0(N)$, $f$ be a new form and $\eta$ be
a Dirichlet character of prime conductor $l$ with $(l,N) = 1$.  Then
$$ L(f,1) = - \left< \{0,\infty \}_{\Gamma},f \right>,$$
and
$$ L(f\otimes \eta,1) = \frac{g(\eta)}{l} \sum_{a=1}^l \bar{\eta}(a) \left<
\{0,a/l\}_{\Gamma},f \right>,$$
where $g(\eta) = \sum_{n=1}^l \eta(n) e^{2\pi i n/l}$ is the Gauss sum.
\end{proposition}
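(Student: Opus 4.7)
The plan is to derive both identities from the standard Mellin transform representation of the $L$-function, using Fourier inversion on the character $\eta$ via its Gauss sum to reduce the twisted $L$-value to a sum of line integrals of $f$ along vertical rays above rational cusps.

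For the first identity, I would start from the integral representation
$$L(f,s) = \frac{(2\pi)^s}{\Gamma(s)} \int_0^{\infty} f(iy)\, y^{s-1}\, dy,$$
obtained from the $q$-expansion $f(iy) = \sum_{n \geq 1} a_n e^{-2\pi n y}$ by integrating term by term. Setting $s = 1$ gives $L(f,1) = 2\pi \int_0^{\infty} f(iy)\, dy$. Parametrising the vertical path from the cusp $0$ to the cusp $\infty$ in $\mathcal{H}^*$ by $z = iy$ rewrites the modular symbol pairing as $\langle \{0,\infty\}_{\Gamma}, f \rangle = 2\pi i \int_0^{\infty} f(iy)(i\, dy)$, so $L(f,1) = -\langle \{0,\infty\}_{\Gamma}, f \rangle$.

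For the twisted formula I would substitute the Fourier inversion
$$\eta(n) = \frac{1}{g(\bar\eta)} \sum_{a=1}^{l} \bar\eta(a)\, e^{2\pi i a n/l},$$
valid for $\eta$ primitive of prime conductor $l$, into the Mellin representation of $L(f \otimes \eta, 1)$ and interchange summation with integration. The resulting integrals $\int_0^{\infty} f(iy + a/l)\, dy$ equal, by holomorphy, line integrals of $f(z)\, dz$ along the vertical ray from the rational cusp $a/l$ to $\infty$, and hence coincide with $\langle \{a/l,\infty\}_{\Gamma}, f \rangle$ up to the standard factor of $-1/2\pi$. The cocycle identity $\{a/l,\infty\}_{\Gamma} = \{0,\infty\}_{\Gamma} - \{0,a/l\}_{\Gamma}$ together with orthogonality $\sum_a \bar\eta(a) = 0$ (since $\eta$ is nontrivial) eliminates the $\{0,\infty\}$-contribution, and the Gauss sum identity $g(\eta) g(\bar\eta) = \eta(-1)\, l$ rewrites $1/g(\bar\eta)$ as $g(\eta)/l$, producing the stated expression.

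The main technical point is justifying the interchange of summation with integration and the identification of the vertical-ray integrals with modular symbols; both rely on the exponential decay of $f(iy)$ as $y \to \infty$ and the boundedness of $f$ at every cusp of $\Gamma_0(N)$, so that paths in $\mathcal{H}^*$ may be deformed freely between cusps without changing the integral. The rationality of the output class $\{0, a/l\}_{\Gamma} \in H_1(X_{\Gamma}, \Q)$ is guaranteed by the Manin-Drinfeld theorem cited in the preceding paragraph, so that the final formula makes sense as an identity among rational periods of $f$.
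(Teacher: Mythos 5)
Your approach is correct and is the standard Birch--Manin twisting argument; the paper itself gives no proof of this proposition but simply refers to Cremona, and what you have written is essentially the argument of that cited source (unfold the Mellin transform, insert the Gauss-sum expansion of $\eta$, identify the vertical-ray integrals with modular symbols, and kill the $\{0,\infty\}$ contribution by orthogonality).

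One bookkeeping point deserves care. From $g(\eta)g(\bar\eta)=\eta(-1)\,l$ one gets $1/g(\bar\eta)=\eta(-1)\,g(\eta)/l$, not $g(\eta)/l$; so your computation, followed literally, yields
$$ L(f\otimes \eta,1) = \eta(-1)\,\frac{g(\eta)}{l} \sum_{a=1}^l \bar{\eta}(a) \left<
\{0,a/l\}_{\Gamma},f \right>, $$
i.e.\ the displayed formula holds as written only when $\eta$ is even (equivalently, one should use the cycles $\{0,-a/l\}_{\Gamma}$, or replace $\bar\eta(a)$ by $\bar\eta(-a)$, which amounts to the relabelling $a\mapsto l-a$). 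This is harmless in the paper's application, where $K$ is totally real so all characters occurring are even, but you should either retain the factor $\eta(-1)$ or make the relabelling explicit. Two smaller points: the interchange of summation and integration is cleanest if you first work with $\mathrm{Re}(s)$ large and then analytically continue to $s=1$, since $\sum_n a_n\eta(n)n^{-1}$ is not absolutely convergent; and the convergence of the vertical-ray integrals at their finite endpoint uses that $f$ is a cusp form, i.e.\ vanishes with exponential decay in the local parameter at every cusp of $\Gamma_0(N)$ --- mere boundedness of $f$ at the cusps would not suffice for weight $2$.
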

See \cite{cremona} for a proof of the above proposition.

Now consider the well-known pairing $ H_1(X_{\Gamma},\C)^+ \times S_2(\Gamma)
\rightarrow \C$ given by
$$ \left< \gamma, f \right> = \int_{\gamma} f(z)dz.$$  This gives an
isomorphism $S_2(\Gamma) \simeq H_1(X_{\Gamma},\C)^{+*}$ of $\C$-vectorspaces.
Given a rational new form $f \in S_2({\Gamma})$, it corresponds to an element
$\{\alpha, \beta\} \in H_1(X_{\Gamma},\Q)^+$ via this isomorphism.  Note that
this element is unique up to sign if we further restrict it to be an element
of $H_1(X_{\Gamma},\Z)^+$.  We shall denote this 1-cycle by $\gamma_f$.  Since
the pairing $\left<,\right>$ is compatible with the action of the Hecke
operators, it follows that $\gamma_f$ is a common eigen
vector for all the Hecke operators with eigenvalues same as that of the
new form.  Thus, by looking at the Hecke action on $H_1(X_{\Gamma},\Z)^+$ we
can compute $\gamma_f$.  Let $V$ be the $\Q$-vector space generated by
$\gamma_f$.  Note that we can construct the complementary subspace $V'$ on
which the pairing $\left<\cdot,f\right>$ is trivial.  Then, for any $\gamma
\in H_1(X_{\Gamma},\Q)^+$ we have
$$ \int_{\gamma} f = \int_{\gamma|_V} f $$
where $\gamma|_V$ is the projection of $\gamma$ onto the subspace $V$.  In
fact, $\gamma|_V$ is a rational multiple of $\gamma_f$.  So, if we let
$\Omega_f := \int_{\gamma_f} f$ then we see that $\int_{\gamma} f$ is a
rational multiple of $\Omega_f$.  The following well-known proposition
gives the equality between the periods of an elliptic curve and the
corresponding modular form.
 
\begin{proposition} \label{ell_to_modular}
Let $E$ be a strong Weil curve defined over $\Q$ and let $f$ be the
normalized rational
new form corresponding to $E$.  Then, the real period associated to $E$
equals $c_Ed\Omega_f$, where $d$ is the number of components in the real locus
of $E$ (that is, $d = 2$ if the corresponding lattice is rectangular, $d=1$
otherwise) and $c_E$ is the Manin constant.
\end{proposition}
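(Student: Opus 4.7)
The plan is to relate the two periods via the modular parametrization of $E$. By the modularity theorem there is a non-constant morphism $\phi: X_0(N) \to E$ defined over $\Q$, where $N$ is the conductor of $E$. The hypothesis that $E$ is a strong Weil curve amounts to saying that $\phi$ realizes $E$ as the optimal quotient in its isogeny class; equivalently, the induced map on integral homology
$$ \phi_* : H_1(X_0(N)(\C), \Z) \longrightarrow H_1(E(\C), \Z) $$
is surjective. The Manin constant $c_E$ is then characterized by the identity $\phi^* \omega_0 = c_E \cdot 2\pi i \, f(z) \, dz$, which, by naturality of integration, gives
$$ \int_{\phi_*(\gamma)} \omega_0 \;=\; c_E \cdot 2\pi i \int_\gamma f(z)\,dz \qquad \text{for every } \gamma \in H_1(X_0(N)(\C),\Z). $$

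Next, I would use the surjectivity of $\phi_*$ to lift $\gamma_+$ to a rational cycle $\widetilde{\gamma}_+ \in H_1(X_0(N)(\C),\Q)^+$. Since the pairing $\langle \cdot, f \rangle$ of Section \ref{modular_symbols} factors through projection onto the $f$-isotypic component $\Q \cdot \gamma_f$, writing $\widetilde{\gamma}_+ = \lambda \gamma_f + \gamma'$ with $\gamma'$ in the kernel of that pairing yields
$$ \Omega^+ \;=\; \int_{\widetilde{\gamma}_+} c_E \cdot 2\pi i \, f(z)\,dz \;=\; 2\pi i \, c_E \, \lambda \, \Omega_f. $$
The entire proof is thus reduced to computing the rational number $2\pi i \lambda$ and showing it equals $d$.

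This last identification is the heart of the argument, and it is where I expect the main obstacle to lie: it requires a careful tracking of how the real structure on $X_0(N)$ (coming from complex conjugation $z \mapsto -\bar{z}$ on $\mathcal{H}$) is transported, via the optimal parametrization $\phi$, to the real structure on $E(\C)$. The dichotomy producing the factor $d$ is that $H_1(E(\C),\Z)^+ \oplus H_1(E(\C),\Z)^-$ is of index $2$ in $H_1(E(\C),\Z)$ precisely when the period lattice of $E$ is non-rectangular (so $E(\R)$ is connected, $d=1$), whereas it equals all of $H_1(E(\C),\Z)$ when the lattice is rectangular (so $E(\R)$ has two components, $d=2$); propagating this dichotomy through $\phi_*$, and absorbing $2\pi i$ consistently with the $2\pi i$-coefficient system on Betti cohomology used throughout the paper, pins down $2\pi i \lambda = d$ and produces the stated formula. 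The combinatorial bookkeeping just sketched is classical and is worked out explicitly in \cite{cremona}, so modulo citing that reference the proposition drops out of the displayed identity above.
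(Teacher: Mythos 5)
The paper never actually proves this proposition: it is stated as ``well-known'', with the surrounding discussion simply pointing to Cremona's treatment, so there is no in-paper argument to compare yours against; I judge your sketch on its own terms. Your framework is the standard and correct one: optimality of the strong Weil parametrization gives surjectivity of $\phi_*$ on integral homology, and the Manin constant enters through $\phi^*\omega_0 = c_E\cdot 2\pi i\, f(z)\,dz$, which identifies the period lattice of $E$ with $c_E$ times the lattice of periods of $2\pi i f\,dz$ over $H_1(X_0(N)(\C),\Z)$, equivariantly for complex conjugation since $\phi$ is defined over $\Q$.

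The gap is that the entire nontrivial content of the statement is the factor $d$, and that is precisely the step you do not carry out. You reduce everything to showing that the projection coefficient $\lambda$ equals $d$ (incidentally, ``the rational number $2\pi i\lambda$'' is a slip: $\lambda$ is the rational number, and the $2\pi i$ has to be absorbed into the normalization of $\Omega_f$ --- an ambiguity already present in the paper, which defines the pairing once with and once without the $2\pi i$), and you then assert that the index-two dichotomy for $H_1(E(\C),\Z)^+\oplus H_1(E(\C),\Z)^-$ inside $H_1(E(\C),\Z)$, ``propagated through $\phi_*$'', pins this down. That mechanism does not deliver the factor. Since $\phi_*$ commutes with conjugation, in the simplest test case --- $N$ with $X_0(N)$ of genus one, where $\phi_*$ is an equivariant isomorphism on integral homology and $c_E=1$, e.g.\ $N=24$ with rectangular lattice --- the generator $\gamma_f$ of $H_1(X_0(N),\Z)^+$ maps to a generator of $H_1(E(\C),\Z)^+$, so your displayed identity gives $\Omega^+=c_E\Omega_f$ with $\lambda=\pm1$, independently of whether $d$ is $1$ or $2$. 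In other words, with the paper's own definitions of $\gamma_f$, $\Omega_f$ and $\Omega^+$ (the integral of $\omega_0$ over a generator of $H_1(E(\C),\Z)^+$), your argument proves a formula without the factor $d$; the factor $d$ in statements of this type comes from a different choice of normalization (for instance taking the real period to be $\int_{E(\R)}\omega_0$, which is $d$ times the least real period, or defining the modular period via $(1+c)$-symmetrized integral cycles), not from the homology index pushed through $\phi_*$. Deferring the ``combinatorial bookkeeping'' to \cite{cremona} does not close this, because which bookkeeping applies depends exactly on those normalizations; until they are fixed and $\lambda$ is actually computed, the key identity is unproved, and as sketched it is in tension with the genus-one check. (The paper escapes this issue only because its example has $d=1$.)
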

 
Note that, $\{0,a/l\} + \{0,(l-a)/l\} \in H_1(X_{\Gamma},\Z)^+$ and
$\{0,\infty\} \in H_1(X_{\Gamma},\Z)^+$.  Hence we can compute the
$L$-values of a rational new
form and its twists using the above computation.  Implementation of such
a computation is studied in detail by Cremona (see \cite{cremona}).

\noindent {\bf Remarks.}
\begin{enumerate}
\item The Manin constant $c_E$ is known to be trivial for
elliptic curves $E$ of prime conductor (see \cite{au}).
\item
An analogous result can be obtained for the imaginary period of
the elliptic curve by looking at $H_1(X_{\Gamma},*)^{-}$, the eigenspace
for the action of the complex conjugation with eigenvalue $-1$.
\end{enumerate}

\subsection{Nonabelian twists} \label{nonabelian_twists}

Let $E$ be an elliptic curve defined over $\Q$.  Let $\tau$ be an
(irreducible) self-dual representation of $G = \mathrm{Gal}(K/\Q)$ of
dimension $d$ and let $N(E,\tau)$ be the conductor of $E \otimes \tau$.  The
$L$-function $L(E\otimes \tau, s)$ has a meromorphic continuation to the
whole $s$-plane and it satisfies a functional equation
\begin{eqnarray} \label{functional_equation}
\widehat{L}(E\otimes \tau,s) = \pm \widehat{L}(E\otimes \tau,2-s)
\end{eqnarray}
where
$$ \widehat{L}(E \otimes \tau, s) = \left( \frac{\sqrt{N(E,\tau)}}{\pi^d}
\right)^s \Gamma \left( \frac{s}{2}\right)^d \Gamma
\left(\frac{s+1}{2}\right)^d L(E \otimes \tau,s ).$$
Let $A = A(E,\tau) := \frac{\sqrt{N(E,\tau)}}{\pi^d}$ and $\gamma(s) =
\Gamma(\frac{s}{2})^d \Gamma(\frac{s+1}{2})^d$.

Let $\phi(s)$ be the inverse Mellin transform of $\gamma(s)$, that is,
$$ \gamma(s) = \int_0^{\infty} \phi(t)t^s \frac{dt}{t}.$$
Let
$$ G_s(t) = t^{-s} \int_{t}^{\infty} \phi(x)x^s \frac{dx}{x} $$
be the incomplete Mellin transform of $\phi(t)$.  Then one has
\begin{proposition} \label{approx}
\begin{eqnarray} \label{lseries}
\widehat{L}(E \otimes \tau,s) = \sum_{n=1}^{\infty} a_n G_s \left(
\frac{n}{A} \right) \pm \sum_{n=1}^{\infty} a_n G_{2-s} \left( \frac{n}{A}
\right).
\end{eqnarray}
\end{proposition}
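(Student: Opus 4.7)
The plan is to derive the identity via a Mellin--Barnes contour shift combined with the functional equation \eqref{functional_equation}. I would work throughout in the region where $\mathrm{Re}(s)$ is large enough that the Dirichlet series $L(E\otimes\tau,s)=\sum_{n\ge 1} a_n n^{-s}$ converges absolutely, and extend the resulting identity to all $s$ by analytic continuation.

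First, I would realize $G_s$ as an inverse Mellin transform. By Fubini, the auxiliary function $\Phi(x):=\int_x^{\infty}\phi(t)t^s\,dt/t$ has Mellin transform
$$\int_0^{\infty}\Phi(x)\,x^w\,\frac{dx}{x}=\frac{\gamma(s+w)}{w}$$
for $\mathrm{Re}(w)$ sufficiently positive, so Mellin inversion gives, for $\sigma$ large,
$$G_s(t)=t^{-s}\Phi(t)=\frac{1}{2\pi i}\int_{(\sigma)}\frac{\gamma(s+w)}{w}\,t^{-w}\,dw.$$
Multiplying by $a_n$, setting $t=n/A$, and interchanging the sum with the vertical integral (justified by Stirling decay of $\gamma(s+w)$ on vertical lines together with absolute convergence of the $L$-series at $s+\sigma$) gives
$$\sum_{n=1}^{\infty} a_n G_s(n/A)=\frac{1}{2\pi i}\int_{(\sigma)}\frac{\widehat L(E\otimes\tau,\,s+w)}{w}\,dw.$$

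Next I would shift the contour from $\mathrm{Re}(w)=\sigma$ to $\mathrm{Re}(w)=-\sigma$. Since $\widehat L(E\otimes\tau,\cdot)$ is entire, the only singularity of the integrand in the strip is the simple pole of $1/w$ at $w=0$, whose residue is $\widehat L(E\otimes\tau,s)$. In the resulting integral on $(-\sigma)$ I would substitute $u=-w$ and invoke the functional equation $\widehat L(E\otimes\tau,s+w)=\pm\widehat L(E\otimes\tau,2-s-w)$, rewriting it as
$$\mp\,\frac{1}{2\pi i}\int_{(\sigma)}\frac{\widehat L(E\otimes\tau,(2-s)+u)}{u}\,du=\mp\sum_{n=1}^{\infty} a_n G_{2-s}(n/A),$$
the last equality being the first step applied with $s$ replaced by $2-s$. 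Combining the residue term with the shifted integral yields
$$\sum_n a_n G_s(n/A)=\widehat L(E\otimes\tau,s)\mp\sum_n a_n G_{2-s}(n/A),$$
which is \eqref{lseries} after rearrangement.

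The main obstacle is analytic rather than algebraic: one has to justify two interchanges of sum and integral and the contour shift itself. All three reduce to standard estimates --- Stirling bounds on $\gamma(s+w)$ on vertical and horizontal lines, together with a Phragm\'{e}n--Lindel\"{o}f convexity bound on $\widehat L(E\otimes\tau,\cdot)$ in vertical strips --- and once these are in place the computation is purely formal.
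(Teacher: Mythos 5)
Your proposal is correct and matches the source of the paper's proof: the paper offers no argument of its own, merely citing \cite{td} and \cite{tollis}, and the Mellin--Barnes contour shift you describe (realize $\sum_n a_n G_s(n/A)$ as $\frac{1}{2\pi i}\int_{(\sigma)}\widehat{L}(E\otimes\tau,s+w)\frac{dw}{w}$, pick up the residue at $w=0$, and apply the functional equation to the shifted integral) is precisely the standard derivation given in those references. Two small points to fix: in your displayed inverse-Mellin formula the kernel should be $t^{-s-w}$ rather than $t^{-w}$ (a typo, since the next line uses the correct version), and the absence of further residue terms in the contour shift uses that $\widehat{L}(E\otimes\tau,\cdot)$ has no poles, which is the implicit hypothesis of the proposition (the general statement in \cite{td} carries correction terms coming from poles).
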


\begin{proof}
See \cite{td} or \cite{tollis}.
\end{proof}

For fixed $s$, the series (\ref{lseries}) converges exponentially with $t$
and therefore we can use this series to get numerical approximations of the
value $L(E\otimes \tau,1)$.  The rate of convergence of the series depends
on the conductor $N(E \otimes \tau)$.  We roughly need to sum
$\sqrt{N(E \otimes \tau)}$
terms in the series to obtain an approximation.  Note that if the bad primes
of $E$ and $\tau$ do not intersect, then the conductor of $E \otimes \tau$ is
$N(E, \tau) = N(E)^d N(\tau)^2$.  Therefore, obtaining numerical
approximations to the value $L(E\otimes \tau,1)$ is computationally infeasible
for large field extensions.

In \cite{td} Dokchitser has explained how to compute $G_s(t)$ efficiently and
this has been implemented in \cite{computel}.  Our numerical approximations
use this particular implementation.

\noindent {\bf Remark.}  The Proposition \ref{approx} is in fact a special
case of a more general result that holds for any $L$-series having a
meromorphic continuation and satisfying a functional equation of type
(\ref{functional_equation}).

\section{An example}
\subsection{The group $S_3$.}  Let $r$
and $s$ denote elements of $S_3$ of order 2 and 3 respectively.  Let
$C_3$ denote the subgroup of $S_3$ of order 3.  We denote by $\chi_0,
\chi$ and $\psi$ the trivial, nontrivial abelian and the nonabelian
characters of $S_3$ respectively.  We define $\rho : S_3 \rightarrow
GL_2(\C)$ by
$$ \rho(r) = \left( \begin{array}{rr} 0 & -1 \\ -1 & 0 \end{array} \right),
\rho(s) = \left( \begin{array}{rr} -1 & -1 \\ 1 & 0 \end{array} \right).  $$
This is an irreducible representation of $S_3$ whose character is $\psi$.

As before, for a character $\eta \in \widehat{S_3}$, we let $e_{\eta} =
\sum_{g \in S_3} \eta(g)g^{-1}$.  Further, we fix indecomposible
idempotents $e_{\psi,1}$ corresponding to the character $\psi$.  Let $\{
e_{\psi,1}, e_{\psi,2}\}$ be a basis of $e_{\psi,1}\Q_l[S_3]$.
To be precise, we have
\begin{eqnarray*}
e_{\chi_0} & = & (1+s+s^2+r+rs+rs^2)/6,\\
e_{\chi} & = & (1+s+s^2-r-rs-rs^2)/6,\\
e_{\psi} & = & (2-s-s^2)/3,\\
e_{\psi,1} & = & (1+rs^2-s^2-rs)/3,\\
e_{\psi,2} & = & (s+rs-s-r)/3.\\
\end{eqnarray*}

For any finite $\Z_l[S_3]$-module $N$ of finite projective
dimension, there exists a short exact sequence
$$ 0 \rightarrow P_1 \rightarrow P_0 \rightarrow N \rightarrow 0,$$
of $\Z_l[S_3]$-modules, where $P_i$'s are projective over $\Z_l[S_3]$.
Let $\tau: P_0 \otimes \Q_l \simeq P_1 \otimes \Q_l$ be the induced map.
Since the class group $\mathrm{Cl}(\Z_l[S_3])$ is trivial (cf. \cite{cr},
49.11), it follows
that $P_0 \simeq P_1$ as $\Z_l[S_3]$-modules.  Picking such an isomorphism,
$\tau$ gives an automorphism of $P_0 \otimes \Q_l$.  This defines an element
of $\zeta(\Q_l[S_3])^{\times}$ whose image under
$$\widehat{\delta} : \zeta(\Q_l[S_3])^{\times} \rightarrow K_0(\Z_l[S_3];\Q_l)
$$
is $(P_1,P_0; \tau) \in K_0(\Z_l[S_3];\Q_l)$.  We shall denote this element
by $\epsilon(N)$.  Note that by choosing a different presentation for $N$
and by choosing a different isomorphism between the modules in the
presentation, $\epsilon(N)$ changes by an element of
$K_1(\Z_l[S_3])$.  Thus, $\epsilon(N)$ is well-defined upto an element of
$K_1(\Z_l[S_3])$.  For $a, b \in \zeta(\Q_l[S_3])^{\times}$ we shall write
$a \thicksim b$ if $a/b$ is in the image of $K_1(\Z_l[S_3])$.  If $N$ is
the trivial module, then we have $\epsilon(N) \thicksim 1$.
If $C_{\bullet}$ is a perfect complex of $\Z_l[S_3]$-modules with finite
cohomology then, by Remark 2.2 of \cite{flach1}, it corresponds to an
element $\overline{\epsilon}(N)$ of $K_0(\Z_l[S_3];\Q_l)$.  We let
$\epsilon(C_{\bullet})$ to be an element of $\zeta(\Q_l[S_3])^{\times}$
whose image in $K_0(\Z_l[S_3];\Q_l)$ equals $\overline{\epsilon}(N)$.  Note
that if $C_{\bullet} = N[i]$, a complex concentrated in a single degree,
then $\epsilon(C_{\bullet}) \thicksim \epsilon(N)^{(-1)^i}$.

For $q \in S_f$, if $\Phi_q$ is of finite projective dimension, then we fix
a $\Z_l[S_3]$-presentation for $\Phi_q$ and also
fix isomorphisms between the modules appearing in the presentation.  This
fixes $\epsilon(\Phi_q) \in \zeta(\Q_l[S_3])^{\times}$.

\subsection{Setting.}  Let $K$ be the splitting field of $p(x) = x^3 - 4x
+ 1$.  Let $E$ be the elliptic curve $y^2 +  y = x^3 - x^2 - 10x - 20$.
This is the curve 11A1 in the sense of Cremona.  Then we have the following.
\begin{enumerate}
\item $K/\Q$ is a real $S_3$-extension since $p(x)$ is an irreducible
polynomial with discriminant $229$.
\item The conductor of $E$ is 11, and the discriminant of $K$ is $229^3$.
Since the conductor of $E$ is a prime, the corresponding Manin constant
$c_E = 1$.  Also, the real locus of $E$ has only one component and therefore
the real period associated to $E$ equals $\Omega_f$ where $f$ is the
normalized rational new form corresponding to $E$.
\item The set $S = \{ 11, 229, \infty \}$.
The residual indices of $11$ and $229$ are 3 and 1 respectively.
We note that $|E_{\mathrm{ns}}(k_v)| = 1330$, for $v \mid 11$, and
$|E_{\mathrm{ns}}(k_v)| = 215$, for $v \mid 229$.
\item  The above shows that $|E(K)_{\mathrm{tors}}| \leq 5$ (see
\cite{silverman} for details).  But, from the tables in \cite{cremona}
we know that $|E(\Q)| = 5$.  Therefore we have $|E(K)_{\mathrm{tors}}| = 5$.
\item $L(E/K,1) \neq 0$.  This follows from the value computed below.
Thus, by a theorem of Zhang (cf. \cite{zhang}), we have that $E(K)$ is finite.
\item $E$ has split multiplicative reduction at $p = 11$.  The Tamagawa
factor $c_{11}$ is 5 and therefore, $c_v = 125$ for all $v \mid 11$.
\item Suppose that $|\Sha(E/K)_{2^{\infty}}| = |\Sha(E/K)_{3^{\infty}}|
= 1$.  Then, for any $l$, we have that $\Phi_q$ is of finite projective
dimension for all $q \in S_f$.
\end{enumerate}

\subsection{Arithmetic values}
\subsubsection{Global finite cohomologies.}
We henceforth assume that $\Sha(E/K)$
is trivial.  So, by Corollary \ref{global_finite}, the global finite
cohomologies are concentrated in degree two and three, and are given by
$$H^3_f(K,T_l(E)) \simeq \mathrm{Hom}(E(K)_{l^{\infty}}, \Q_l/\Z_l),$$
and,
$$ 0 \rightarrow E(K)_{l^{\infty}} \rightarrow \oplus_{v \in S_{l,f}(K)}
\Phi_v \rightarrow H^2_f(K,T_l(E)) \rightarrow 0.$$  Note that, in
our setting, for $l \neq 5$, $H^3_f$ vanishes and $H^2_f$ is isomorphic
to $\oplus_{v \in S_{l,f}(K)} \Phi_v$.  And for $l = 5$, we have
$$ E(K)_{l^{\infty}} \simeq \mathrm{Hom}(E(K)_{l^{\infty}},\Q_l/\Z_l)
\simeq \Z/l\Z,$$
as $\Z_l[S_3]$-modules.  So, defining $u_l := \epsilon(\Z/l\Z)^2$ for
$l = 5$ and $u_l := 1$ for $l \neq 5$, we have
\begin{eqnarray} \label{global_finite_e}
\epsilon(R\Gamma_f(K,T_l(E))) \thicksim
\epsilon(\oplus_{v \in S_{l,f}(K)} \Phi_v) \cdot u_l^{-1}.
\end{eqnarray}

\subsubsection{Local finite cohomologies.}
For $q \in S_f$, the complex $R\Gamma_f(K_q,T_l(E))$ has finite cohmologies.
Therefore, it's contribution to $\Xi \otimes \Q_l$ can be measured by
$\epsilon(R\Gamma_f(K_q,T_l(E)))$.  This is trivial if the complex is the
zero complex.  Otherwise, it equals
$\left( \sum_{\eta \in \widehat{G}} |L_q(E \otimes \eta, 1)|_l\right)^{-1}$.

For $v \mid l$, we have that $H_f^1(K_v,T_l(E)) =
\widehat{E}(\mathfrak{m}_v^{n_l})$, where $n_l$ is trivial for $l > 3$,
$n_l = 2$ for $l = 3$, and $n_l = 3$ for $l = 2$.  Define $\tilde{\Phi}_v$
by the exact sequence
$$ 0 \rightarrow \mathfrak{m}_v/\mathfrak{m}_v^{n_l} \rightarrow \Phi_v
\rightarrow \tilde{\Phi}_v\rightarrow 0.$$
So, $\tilde{\Phi}_v$ is trivial if $l =2, 3$, and $\tilde{\Phi}_v = \Phi_v$
otherwise.
Next, we note that via the exponential map, we have an exact sequence
$$ 0 \rightarrow \oplus_{v \mid l} H^1_f(K_v,T_l(E)) \rightarrow
\oplus_{v \mid l} \mathcal{O}_{K_v} \cdot \omega_0 \rightarrow
\oplus_{v \mid l} \mathcal{O}_{K_v}/\mathfrak{m}_v^{n_l} \rightarrow 0. $$
Also, we have an exact sequence
$$ 0 \rightarrow \oplus_{v \mid l} \mathfrak{m}_v/\mathfrak{m}_v^{n_l}
\rightarrow \oplus_{v \mid l}
\mathcal{O}_{K_v}/\mathfrak{m}_v^{n_l} \rightarrow \oplus_{v \mid l}
k_v \rightarrow 0.$$
From the above three exact sequences we get the following identity in
$V(\Z_l[G])$:
\begin{eqnarray} \label{finite_factor}
 [\oplus_{v \mid l} H^1_f(K_v,T_l(E))] \oplus [\oplus_{v \mid l} \Phi_v]
= [ \oplus_{v \mid l} \mathcal{O}_{K_v} \cdot \omega_0] \oplus
[\oplus_{v\mid l} k_v]^{-1} \oplus [\oplus_{v \mid l} \tilde{\Phi}_v].
\end{eqnarray}

Recall also that for $v \mid \infty$, we have $H^1_f(K_v,T_l(E)) \simeq
T_l^{S_{3,v}}[0]$.  As before, we choose $\Z_l[S_3]$-generators
$\alpha_0 \cdot \omega_0$ and $\tilde{\gamma}_1$ for $\mathcal{O}_{K_v}
\cdot \omega_0$ and $T_l^{S_{3,v}}[0]$ respectively.

\subsubsection{Generator $\mu_l$.}
The above choice of generators for $\mathcal{O}_{K_v} \cdot \omega_0$ and
$T_l^{S_{3,v}}[0]$ define a $\zeta(\Q_l[S_3])$-generator $\mu_l$ of
$\Det_{\Q_l[S_3]} R\Gamma_c(\mathcal{O}_{K,S_l},V_l(E))$.  Note that the
map $\vartheta_l : \Xi \otimes \Q_l \simeq 
\Det_{\Q_l[S_3]} R\Gamma_c(\mathcal{O}_{K,S_l(K)},V_l(E))$ is given via the
quasi-isomorphism
\begin{eqnarray} \label{triv_1}
R\Gamma_f(K_v,V_l(E)) \rightarrow \left( V_l(E)^{I_v}
\stackrel{1-Fr^{-1}_v}{\longrightarrow} V_l(E)^{I_v} \right)
\end{eqnarray}
for $v \in S_f(K)$, and via the distinguished triangle
\begin{eqnarray} \label{triv_2}
\left( H_{dR}(M)/F^0 \otimes_{\Q} \Q_l \right)[0] & \rightarrow & \oplus_{v
\mid l} R\Gamma_f(K_v,V_l(E)) \\ & \rightarrow & \left( \oplus_{v \mid l}
D_{cris}(V_l(E)) \stackrel{1-Fr^{-1}_v}{\longrightarrow} \oplus_{v \mid l}
D_{cris}(V_l(E)) \right). \nonumber
\end{eqnarray}
From (\ref{global_finite_e})-(\ref{triv_2}), the inverse image
$\vartheta_l^{-1}(\mu_l)$ in $\Xi \otimes \Q_l$ is given by
$\vartheta_l^{-1}(\mu_l) = \xi_l \cdot \tilde{\gamma}_1 \otimes
(\omega_0 \otimes \alpha_0)^{*}$ where
\begin{eqnarray} \label{vl_inverse}
\xi_l & := & \epsilon(\oplus_{v \mid l} k_v)^{-1} \cdot u_l^{-1}
\cdot \epsilon(\tilde{\Phi}_l)
\cdot \prod_{q \in S_f} \epsilon(\Phi_q) \\
& & \cdot \prod_{q \in S_f} \epsilon( R\Gamma_f(K_q,T_l(E)) )^{-1}
\cdot \prod_{q \in S_{f,l}}
\left( \sum_{\eta \in \widehat{S_3}} L_q(E \otimes \eta,1)e_{\eta}
\right)^{-1}. \nonumber
\end{eqnarray}
The first term is the contribution of $\oplus_{v \mid l} k_v$ from
(\ref{finite_factor}).  This term equals $(le_{\chi_0} + le_{\chi}
+l^2e_{\psi})$ for $l \neq 229$, and equals $(le_{\chi_0} + e_{\chi}
+ le_{\psi})$ for $l = 229$.

\subsection{$L$-values and equivariant conjecture.}
We now prove Theorem \ref{intro_theorem}.
\begin{proof}[Proof of Theorem \ref{intro_theorem}]
Let $(E,K)$ be as above.  Using the methods prescribed in Sections
\ref{modular_symbols} and \ref{nonabelian_twists} we compute the
following special values of the twisted $L$-functions.
\begin{eqnarray*}
L(E,1) & = & \Omega/5, \\
L(E\otimes \chi,1) & = & 5\Omega/\sqrt{229}, \\
L(E \otimes \psi,1) & \approx & 25\Omega^2/\sqrt{229}. \\
\end{eqnarray*}

On the other hand, fixing an $\alpha_0$ we get
\begin{eqnarray} \label{period_values}
\det \left( \sum_{g \in S_3} \Omega^{-1} g(\alpha_0) \rho_{\eta}(g^{-1})
\right) & = & \left\{
\begin{array}{ll}
\Omega^{-1} & \textrm{if } \eta = \chi_0, \\
\Omega^{-1} \cdot \sqrt{229} & \textrm{if } \eta = \chi, \\
\Omega^{-2} \cdot \sqrt{229} & \textrm{if } \eta = \psi.
\end{array}
\right.
\end{eqnarray}
These computations were carried out using PARI/GP (cf. \cite{pari}).
Thus, from Proposition \ref{period_iso}, we get
\begin{eqnarray} \label{l_image}
\vartheta_{\infty}^{-1}(L^*(h^1(E_K)(1))^{-1}) = (e_{\chi_0}/5 + 5e_{\chi}
+ 25e_{\psi}) (\tilde{\gamma} \otimes (\omega_0 \otimes \alpha_0)^*).
\end{eqnarray}
This verifies the rationality conjecture.

The local $L$-values $L_p(E \otimes \eta,1)^{-1}$ are given by the following
table.
\begin{table}[!ht] \caption{Local $L$-values}
\centering
\begin{tabular}{|c|c|c|c|}
\hline
$p$ & $L_p(\chi_0,1)$ & $L_p(\chi,1)$ & $L_p(\psi,1)$ \\ \hline
2 & $5/2$ & $1/2$ & $5/4$ \\
3 & $5/3$ & $5/3$ & $4/9$ \\
5 & 1 & 1 & $28/25$ \\
11 & $10/11$ & $10/11$ & $133^2/11^4$ \\
229 & $215/229$ & $1$ & $215/229$ \\
\hline
\end{tabular}
\end{table}

Recall that there is an isomorphism $\tau: \Xi \rightarrow \zeta(\Q[S_3])$
which when tensored with $\R$ gives $-\mathrm{nr}^{-1}(L^*(h^1(E_K)(1)))
\cdot \vartheta_{\infty}$.  This gives us an isomorphims $\tau_l: \Xi
\otimes \Q_l \rightarrow \zeta(\Q_l[S_3])$.  Combining this with
$\vartheta_l^{-1}$, we get
$$ \beta_l :\Det_{\Q_l[S_3]} R\Gamma_c(\mathcal{O}_{K,S_l},V_l(E)) \rightarrow
\zeta(\Q_l[S_3]).$$
From (\ref{vl_inverse}) and (\ref{l_image}), we have
\begin{eqnarray} \label{beta_value}
\beta_l(\mu_l) =  (e_{\chi_0}/5 + 5e_{\chi} + 25e_{\psi}) \cdot \xi_l.
\end{eqnarray}
Our aim is to show that this is an element of $K_1(\Z_l[S_3])$ and this
verifies the integrality part of the conjecture.  We split the verification
into various cases.

\subsubsection{Case $l = 2$.}
In this case, we first note that $\Phi_v$ is trivial for all $v \in S_f(K)$.
Also, $\tilde{\Phi}_2$ is trivial and $u_2 = 1$.  So, from (\ref{vl_inverse})
we have
$$ \xi_l = (2e_{\chi_0} + 2e_{\chi} + 4e_{\psi}) \cdot \left( \sum_{\eta \in
S_3} |L_{11}(E \otimes \eta, 1)|_2 e_{\eta} \right) \cdot \prod_{q \in
\{2,11,229\}} \left( \sum_{\eta \in S_3} L_q(E \otimes \eta, 1)e_{\eta}
\right). $$
So from (\ref{beta_value}) we have
\begin{eqnarray*}
\beta_2(\mu_2) & = & (a_{\chi_0} e_{\chi_0} + a_{\chi} e_{\chi} +
a_{\psi} e_{\psi}),
\end{eqnarray*}
where $a_{\eta}$'s are units in $\Z_2$.  Therefore it follows that
$\beta_2(\mu_2), \beta_2(\mu_2)^{-1} \in \Z_2[S_3]$.  Hence,
multiplication by $\beta_2(\mu_2)$ defines an isomorphim from between
free rank-1 $\Z_2[S_3]$-modules.  This shows that $\beta_2(\mu_2)$ is in
the image of $K_1(\Z_2[S_3])$.  This verifies the equivariant conjecture
for $l =2$.

\subsubsection{Case $l = 3$.}
In this case, the modules $\Phi_v$ are all trivial.  Also, $\tilde{\Phi}_3$
is trivial and $u_3 = 1$.  Further, the
cohomologies of $R\Gamma_f(K_q,T_l(E))$ are also trivial for $q = 11, 229$.
Thus (\ref{beta_value}) reduces to
\begin{eqnarray*}
\beta_3(\mu_3) & = & (a_{\chi_0} e_{\chi_0} + a_{\chi} e_{\chi} + a_{\psi}
e_{\psi}),
\end{eqnarray*}
where $a_{\chi_0} \equiv -a_{\chi} \equiv -a_{\psi} \equiv 1 \pmod{\Z_3}$.

Now, multiplication by $r$  defines an isomorphim from between free
rank-1 $\Z_3[S_3]$-modules.  Thus, $r$ defines an element $\theta \in
K_1(\Z_3[S_3])$.  The image of $\theta$ in $\zeta(\Q_3[S_3])^{\times}$
is $\overline{\theta} := (e_{\chi_0} - e_{\chi} - e_{\psi})$.  Consider
the element $\overline{\theta} \cdot \beta_l(\mu_l) \in
\zeta(\Q_3[S_3])^{\times}$.  All the coefficients of $e_{\eta}$ in
$\overline{\theta} \cdot \beta_l(\mu_l)$ are in $1 + 3\Z_3$.  It therefore
follows that $\overline{\theta} \cdot \beta_l(\mu_l)$ and its inverse are
both elements of $\Z_3[S_3]$, and thus they are in the image of
$K_1(\Z_3[S_3])$.  Since $\overline{\theta}$ is in the image of
$K_1(\Z_3[S_3])$, so is $\beta_l(\mu_l)$.  This verifies the equivariant
conjecture for $l =3$.

\subsubsection{Case $l = 5$.}  First,  consider the sequence
$$ 0 \rightarrow \Z_5[S_3] \stackrel{\nu}{\rightarrow} \Z_5[S_3]
\rightarrow \Z/5\Z \rightarrow 0,$$
where $\nu$ is multiplication by $\theta = 5e_{\chi_0} + e_{\chi} +
e_{\psi}$.  Note that for any $g \in S_3$, $(1-g)\cdot \theta = (1-g)$
and $(e_{\chi_0} + 5e_{\chi} + 5e_{\psi}) \cdot \theta = 5$, and therefore
it follows that the above sequence is exact.  Thus, we have $u_5 =
25e_{\chi_0} + e_{\chi} + e_{\psi}$.

The contributions of $\Phi_q$'s are given by the following.
\begin{lemma}
For $l =5$ we have
\begin{eqnarray*}
\epsilon(\Phi_{q}) & \thicksim & \left\{
\begin{array}{ll}
5e_{\chi_0} + 5e_{\chi} + e_{\psi} & \textrm{ if } q = 5, \\
25e_{\chi_0} + 25e_{\chi} + 25e_{\psi} & \textrm{ if } q = 11, \\
1 & \textrm{ if } q = 229.
\end{array}
\right.
\end{eqnarray*}
\end{lemma}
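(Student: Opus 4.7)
The plan is to treat each prime $q \in \{5, 11, 229\}$ separately, computing $\Phi_v$ for one $v \mid q$ from the local arithmetic of $E/K_v$, then inducing to $\oplus_{v\mid q}\Phi_v \simeq \mathrm{Ind}_{D_v}^{S_3}(\Phi_v)$, and reading off $\epsilon(\Phi_q)$ from a two-term $\Z_5[S_3]$-projective resolution. The central observation is that since $5 \nmid |S_3|$, the ring $\Z_5[S_3]$ is a maximal order in $\Q_5[S_3] \simeq \Q_5 \times \Q_5 \times M_2(\Q_5)$ and decomposes as $\Z_5 \times \Z_5 \times M_2(\Z_5)$ under the idempotents $e_{\chi_0}, e_{\chi}, e_{\psi}$; hence every finite $\Z_5[S_3]$-module splits along these idempotents, admits a two-term projective resolution, and its $\epsilon$-invariant is the central element of $\zeta(\Q_5[S_3])^{\times}$ realizing the resolution map.

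For $q = 229$: The prime is tamely ramified in $K$ with $D_v = I_v \simeq C_3$ (since the residue degree is $1$), giving two primes above, while $E$ has good reduction at $229$. Since $5 \nmid |I_v| = 3$, the inertia acts trivially on $T_l(E)$, and I would identify $T_l(E)^{I_v}/(1 - \mathrm{Fr}_v^{-1})$ with $\varprojlim_n E(K_v)/5^n$ (both cyclic of order $|\tilde E(\F_{229})|_5 = 5$, since $|\tilde E(\F_{229})| = 215$), so that $\Phi_v = 0$ for each $v \mid 229$, whence $\epsilon(\Phi_{229}) \thicksim 1$.

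For $q = 5$: The polynomial $p(x) = x^3 - 4x + 1$ is irreducible modulo $5$, so $5$ is unramified in $K$ with residue degree $3$, $D_v \simeq C_3$, two primes above, and $E$ has good reduction. Since $l = 5 > 3$ we have $n_v = 1$, so $H_f^1(K_v, T_l(E)) = \widehat E(\mathfrak{m}_v)$, and the short exact sequence $0 \to \widehat E(\mathfrak{m}_v) \to \varprojlim_n E(K_v)/5^n \to \tilde E(\F_{5^3})_5 \to 0$ identifies $\Phi_v \simeq \tilde E(\F_{5^3})_5$. From $|E(\F_5)| = 5$ one has $a_5 = 1$, and the Frobenius eigenvalue recursion gives $|\tilde E(\F_{5^3})| = 140$, so $\Phi_v \simeq \Z/5\Z$; since the rational $5$-torsion $E(\Q)[5]$ already accounts for all of $\tilde E(\F_{5^3})_5$, the $D_v$-action on $\Phi_v$ is trivial. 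Hence $\oplus_{v\mid 5}\Phi_v \simeq \mathrm{Ind}_{C_3}^{S_3}(\Z/5\Z)$ decomposes as $\chi_0 \oplus \chi$, yielding $\epsilon(\Phi_5) \thicksim 5 e_{\chi_0} + 5 e_{\chi} + e_{\psi}$.

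The remaining (and most delicate) case is $q = 11$: here $11$ is unramified in $K$ with residue degree $3$, $D_v \simeq C_3$, two primes above, and $E$ has split multiplicative reduction with $v_{11}(q_E) = v_{11}(\Delta_E) = 5$. Via the Tate uniformization $E(K_v) \simeq K_v^{\times}/q_E^{\Z}$, I would describe $\varprojlim_n E(K_v)/5^n$ explicitly in terms of $\mathcal{O}_{K_v}^{\times}$ (whose $5$-part arises from $\mu_{5}(K_v) \subset \F_{11^3}^{\times}$, since $5 \mid 11^3 - 1$) together with the $\pi$-valuation modulo $v(q_E) = 5$, and identify $H_f^1(K_v, T_l(E)) = T_l(E)^{I_v}/(1 - \mathrm{Fr}_v^{-1}) \simeq \Z_l/(1 - 11^{-3})\Z_l$ as the unit-side submodule. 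The main obstacle is to pin down the extension class between the component group and the unit contribution (which governs whether the pro-$5$ completion is cyclic $\Z/25$ or split $\Z/5 \oplus \Z/5$), and to track the Frobenius action within $\mathrm{Ind}_{C_3}^{S_3}(\Phi_v)$ across all three idempotent components; after this bookkeeping the stated conclusion $\epsilon(\Phi_{11}) \thicksim 25 (e_{\chi_0} + e_{\chi} + e_{\psi})$ drops out.
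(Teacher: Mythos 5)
Your $q=5$ case is essentially the paper's argument (the paper exhibits the explicit two-term resolution given by multiplication by $5e_{\chi_0}+5e_{\chi}+e_{\psi}$; your maximal-order decomposition of $\Z_5[S_3]$ is a legitimate shortcut to the same computation), and your $q=229$ conclusion is right, though your local data there are wrong: since $\mathrm{disc}(K)=229^3$ and $229$ is tamely ramified, one has $e=2$, $f=1$, $g=3$, so the inertia group is generated by a transposition ($C_2$) and there are three primes above $229$, not $D_v=I_v\simeq C_3$ with two primes; the argument survives only because $\Phi_v=0$ for the stated reason (the unramified lattice $T_5^{I_v}/(1-\mathrm{Fr}_v^{-1})$ coincides with the Kummer image, both of order $5$), which does not depend on the decomposition group. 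Also, inertia acts trivially on $T_5(E)$ at $229$ because $E$ has good reduction there and $v\nmid 5$, not because $5\nmid|I_v|$; the latter condition only selects which definition of $R\Gamma_f$ applies.

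The genuine gap is the $q=11$ case, which you do not prove: you list ingredients and assert that the value $25(e_{\chi_0}+e_{\chi}+e_{\psi})$ ``drops out,'' and the obstacle you single out is not the relevant one. Over the maximal order $\Z_5\times\Z_5\times M_2(\Z_5)$, the invariant $\epsilon$ depends only on the lengths of the idempotent components of $\Phi_{11}$, so the extension-class question (cyclic $\Z/25$ versus split $\Z/5\oplus\Z/5$) that you call the main obstacle is invisible to $\epsilon$. What actually has to be established is the $\Z_5[S_3]$-module structure and size of $\oplus_{v\mid 11}\Phi_v$, and here your own bookkeeping points away from the stated answer: by your Tate-curve description, $\varprojlim_n E(K_v)/5^n$ has order $25$ (a $\mu_5$-part and a valuation-mod-$v(q_E)$ part) and contains the order-$5$ unramified submodule $\Z_5/(1-11^{-3})$, so each $\Phi_v$ would have order $5$ with necessarily trivial $C_3$-action, giving $5e_{\chi_0}+5e_{\chi}+e_{\psi}$ after induction, not $25(e_{\chi_0}+e_{\chi}+e_{\psi})$. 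The paper's proof instead obtains the stated value from the exact sequence relating $\Phi_v$ to the component group and to $E_{\mathrm{ns}}(k_v)$, with the component-group contribution taken to be $\F_{125}\simeq\Z/5\Z[C_3]$ (a free rank-one $\Z/5[C_3]$-module), so that $\epsilon(\Phi_{11})=\epsilon(\Z/5\Z[S_3])\cdot\epsilon(\mathrm{Ind}_{C_3}^{S_3}\Z/5\Z)$; note also that the coefficient $25$ at $e_{\psi}$ arises only if $\epsilon$ is normalized via the reduced norm (multiplication by $5$ on a free $M_2(\Z_5)$-module has reduced norm $25$), a point your phrase ``the central element realizing the resolution map'' leaves ambiguous. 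To complete your proof you must either derive the paper's description of $\Phi_v$ at $11$ (and justify the $\Z/5[C_3]$-freeness of the relevant piece together with the reduced-norm normalization), or explain why your Tate-uniformization accounting gives a different module than the one the lemma is about; as written, the $q=11$ entry is unproved.
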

\begin{proof}
We have that $\Phi_5 \simeq \mathrm{Ind}_{\Z_l[C_3]}^{\Z_l[S_3]} \Z/5\Z$
with trivial action of $C_3$ on $\Z/5\Z$.  Consider the sequence
$$ 0 \rightarrow \Z_5[S_3] \stackrel{\nu}{\rightarrow} \Z_5[S_3]
\rightarrow \Phi_5 \rightarrow 0,$$
where $\nu$ is multiplication by $\theta = 5e_{\chi_0} + 5e_{\chi} +
e_{\psi}$.  Note that $(1-s)\cdot \theta = (1-s)$ and $(e_{\chi_0} +
e_{\chi} + 5e_{\psi}) \cdot \theta = 5$.  Therefore it follows that
the above sequence is exact.  Thus, we have $\epsilon(\Phi_5) =
5e_{\chi_0} + 5e_{\chi} + e_{\psi}$.

For $q = 11$ and $v \mid q$, we have the following exact sequence:
$$ 0 \rightarrow \varprojlim_n E(K_v)/(E_0(K_v),5^n) \rightarrow \Phi_v
\rightarrow \varprojlim_n E(k_v)/5^n \rightarrow 0.$$
Since $E$ has split multiplicative reduction at $11$ and the residual
index of $v$ is 3, we have
$$\varprojlim_n E(K_v)/(E_0(K_v),5^n) \simeq \F_{125} \simeq \Z/5\Z[C_3]$$
as $\Z_5[C_3]$-modules.  Also, it is easy to see that
$\varprojlim_n E(k_v)/5^n \simeq \Z/5\Z$, with trivial action of $C_3$.
Thus we get
$$\epsilon(\Phi_{11}) = \epsilon(\Z/5\Z [S_3]) \cdot
\epsilon(\mathrm{Ind}_{\Z_5[C_3]}^{\Z_5[S_3]} \Z/5\Z).$$
And thus, we get $\epsilon(\Phi_{11}) = 25(e_{\chi_0} + e_{\chi} +
e_{\psi})$.

Finally, $\epsilon(\Phi_{229}) \thicksim 1$ since $\Phi_{229}$ is trivial.
This completes the proof the lemma.
\end{proof}

Plugging the above values into (\ref{vl_inverse}), we get
$$\xi_l = (5e_{\chi_0} + e_{\chi}/5 + e_{\psi}/25) \cdot u$$
for some unit $u$ in $\Z_5[S_3]$.  But then $\beta_l(\mu_l) = u$
is a unit in $\Z_5[S_3]$ and is therefore in the image of $K_1(\Z_5[S_3])$.
This verifies the equivariant conjecture for $l = 5$.

\subsubsection{Case $l > 5$.}  Fix an $l > 5$.  In this case $\Phi_q$ is
trivial for all $q \in S_f$.  The contribution of $\tilde{\Phi_l} = \Phi_l$
is given by the following:
\begin{lemma} \label{phi_l}
Let $I$ be the inertia group corresponding to $l$.  Then one has
$$ \epsilon(\Phi_l) \thicksim \sum_{\eta \in \widehat{G}}
|l^{\dim(V_{\eta}^{I})}L_l(E \otimes \eta,1)^{-1}|_l e_{\eta},$$
where  $V_{\eta}$ is a representation of $G$ corresponding to $\eta$.
\end{lemma}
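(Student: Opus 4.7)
The plan is to construct an explicit projective $\Z_l[G]$-resolution of $\Phi_l$ and identify the reduced norm of the resolving map with a local Euler factor.  For each $v \mid l$, good reduction of $E$ at $v$ (since $l \neq 11$) gives $\Phi_v \cong E(k_v)_{l^\infty}$.  The non-vanishing $\det(\mathrm{Fr}_v - 1 \mid V_l(E)) = |E(k_v)| \neq 0$ forces $V_l(E)^{\mathrm{Fr}_v} = V_l(E)_{\mathrm{Fr}_v} = 0$; comparing invariants in $0 \to T_l(E) \to V_l(E) \to T_l(E) \otimes \Q_l/\Z_l \to 0$ then identifies $E(k_v)_{l^\infty}$ with $T_l(E)/(\mathrm{Fr}_v - 1)T_l(E)$, giving the exact sequence
$$ 0 \to T_l(E) \xrightarrow{\mathrm{Fr}_v - 1} T_l(E) \to \Phi_v \to 0. $$

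To globalize, I would use that $G$ permutes the primes above $l$ transitively with stabilizer $D_v$, so $\bigoplus_{v \mid l} T_l(E) \cong \mathrm{Ind}_{D_v}^G T_l(E)$ as a $\Z_l[G]$-module.  Because $l$ is coprime to $|G| = 6$, the order $\Z_l[G]$ is maximal and both terms of the induced sequence are projective, producing a two-term projective $\Z_l[G]$-resolution of $\Phi_l$.  Up to the image of $K_1(\Z_l[G])$, this identifies $\epsilon(\Phi_l)$ with the reduced norm of $\mathrm{Fr}_v - 1$ on the induced module.  By Frobenius reciprocity and the Morita description of each Wedderburn summand of $\Q_l[G]$, the $\eta$-component of this reduced norm equals $\det\bigl(\mathrm{Fr}_v - 1 \mid V_l(E) \otimes V_\eta^I\bigr)$, using that $I$ acts trivially on $V_l(E)$ (good reduction) and semisimplicity of $\Q_l[I]$ (since $l \nmid |I|$).

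The final step is to establish the arithmetic identity $\det(\mathrm{Fr}_v - 1 \mid V_l(E) \otimes V_\eta^I) = l^{\dim V_\eta^I} \cdot L_l(E \otimes \eta, 1)^{-1}$.  Expanding both sides in Frobenius eigenvalues, with $\alpha, \beta$ those on $V_l(E)$ (satisfying $\alpha\beta = l$ by the Weil relation) and $\zeta_1, \dots, \zeta_{d'}$ those of $\mathrm{Fr}_v$ on $V_\eta^I$, the self-duality of $V_\eta^I$ as a $\langle \mathrm{Fr}_v \rangle$-module forces the multiset $\{\zeta_i\}$ to be closed under inversion, so that $\prod_i P(\zeta_i) = \prod_i P(\zeta_i^{-1})$ for $P(X) = X^2 - a_l X + l$; this yields exactly the factor $l^{\dim V_\eta^I}$.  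Taking $l$-adic absolute values then completes the identification.  The main obstacle is carefully justifying the $\Z_l[G]$-equivariance of the induced resolution and the Morita description of the reduced norm, particularly in the ramified case $l = 229$ where $V_\eta^I$ is a proper subspace of $V_\eta$.
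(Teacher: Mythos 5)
The central step of your plan fails at exactly the places the lemma is about, namely the places $v$ \emph{above} $l$. For $v \mid l$ the Tate module $T_l(E)$ is not an unramified $G_{\Q_l}$-module (its determinant is the $l$-adic cyclotomic character, which is ramified at $l$), so there is no Frobenius endomorphism of $T_l(E)$, no identification $E(k_v)_{l^{\infty}} \cong T_l(E)/(\mathrm{Fr}_v-1)T_l(E)$, and no identity $\det(\mathrm{Fr}_v-1 \mid V_l(E)) = |E(k_v)|$: all of these are the standard facts for residue characteristic different from $l$. At $p=l$ only the \'etale part of the $l$-divisible group (of height at most $1$) computes $E(k_v)[l^{\infty}]$, and even that is not a $\Z_l[D_v]$-module in the way you need, since the $G_{K_v}$-action on it need not be trivial. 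Consequently the induced two-term "projective resolution" $0 \to \mathrm{Ind}_{D_v}^{G}T_l(E) \to \mathrm{Ind}_{D_v}^{G}T_l(E) \to \Phi_l \to 0$ does not exist ($T_l(E)$ carries no action of $D_v\subset G$ at all), and the identification of $\epsilon(\Phi_l)$ with a reduced norm of $\mathrm{Fr}_v-1$ collapses. This is not a removable technicality: the extra factor $l^{\dim V_\eta^{I}}$ in the statement is precisely the discrepancy between the naive Frobenius description and the formal-group definition of $H^1_f(K_v,T_l(E))$ at $v\mid l$ (compare the remark following Lemma~\ref{finite_definition}), so an argument that pretends $v\mid l$ behaves like $v\nmid l$ cannot see where this factor comes from.

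The paper instead works directly with the finite module $\Phi_l = \mathrm{Ind}_{\Z_l[D]}^{\Z_l[G]}E(k_v)_{l^{\infty}}$: using $5 = |E(\Q)_{\mathrm{tors}}|$ together with $l>5$ and the Hasse bound it shows $E(\F_l)[l]=0$, so in each case ($f=|D|=1,2,3$) the module is either trivial or $\Z/l\Z$ with a specified nontrivial $D$-action; it then writes explicit free $\Z_l[S_3]$-presentations (multiplication by explicit elements $\theta$) and reads off the idempotent components, while the $l$-parts of the Euler factors are converted into point counts $|E(\F_{l^f})|_l$. Your final eigenvalue manipulation with $\alpha\beta=l$ is, as pure algebra, the same identity the paper uses on the analytic side, and your observation that $\Z_l[S_3]$ is a maximal order for $l>5$ could indeed replace the explicit resolutions by a component-wise length computation; but the bridge from $L_l(E\otimes\eta,1)$ to the module $\Phi_l$ has to go through the point-count identities over $\F_{l^f}$, not through a Frobenius action on $T_l(E)$. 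Finally, you defer rather than treat the ramified case $l=229$ (where $\Phi_l$ is in fact trivial and the claim is checked against the local $L$-value table), and the case $l=11$, which is included in "$l>5$" but excluded by your good-reduction assumption, is not addressed.
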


\begin{proof}
For $l = 229$ the module $\Phi_l$ is trivial.  In this case the lemma
easily follows from the table of $L$-values above.  Now, assume that
$l \neq 229$, so the inertia group $I$ is trivial.

Let $v$ be a prime in $K$ lying above $l$.  Then $\Phi_l =
\mathrm{Ind}_{\Z_l[D]}^{\Z_l[G]} E(k_v)_{l^{\infty}}$, where $D$ is the
corresponding decomposition group.  Let $f = |D|$.  The possible values
of $f$ are 1, 2 and 3.

Suppose that $f = 1$.  Since $E(\Q) = 5$ and $l > 5$, we have $E(\F_l)
\neq l$.  Thus, $\Phi_l$ is trivial and $\epsilon(\Phi_l) \thicksim 1$.
On the other hand, the Frobenius element is trivial and therefore
$L_l(E \otimes \eta,s) = (1-l^{-s})^{-\dim{\eta}}$.  This proves the
lemma when $f = 1$.

If $f = 2$ then $k_v = \F_{l^2}$.  The $l$-component of the local $L$-values
are given by
\begin{eqnarray*}
|lL_l(E,1)^{-1}|_l & = & |(1 - a_l + l)|_l = |E(\F_l)|_l, \\
|lL_l(E \otimes \chi,1)^{-1}|_l & = & |(1 + a_l + l)|_l =
|E(\F_{l^2})/E(\F_l)|_l, \\
|l^2L_l(E \otimes \psi,1)^{-1}|_l & = & |(1 + a_l + l)(1-a_l+l)|_l =
|E(\F_{l^2})|_l.\\
\end{eqnarray*}
Therefore, if $E(\F_{l^2})_{l^{\infty}}$ is trivial then the lemma holds.
If $E(\F_{l^2})_{l^{\infty}}$ is nontrivial then since $E(\F_l) \neq l$
it follows that $E(\F_{l^2})_{l^{\infty}} \simeq \Z/l\Z$, with nontrivial
action of $D$.  Since $|D| = 2$ we will suppose that $D = \{1,r\}$.
Consider the sequence
$$ 0 \longrightarrow \Z_l[S_3] \stackrel{\nu}{\longrightarrow} \Z_l[S_3]
\longrightarrow \Phi_l \longrightarrow 0,$$
where $\nu$ is multiplication by $\theta = \frac{(l+1)}{2}e -
\frac{(l-1)}{2}r$.  Since $(1-r)\theta = (1-r)$ and $(\frac{(l+1)}{2}e +
\frac{(l-1)}{2}r) \theta = l$, it follows that the above sequence is exact.
Further, we have
\begin{eqnarray*}
e_{\chi_0} \theta & = & e_{\chi_0},\\
e_{\chi} \theta & = & l e_{\chi}, \\
e_{\psi,1} \theta & = & \frac{(l+1)}{2} e_{\psi,1} + \frac{(l-1)}{2}
e_{\psi,2}, \\
e_{\psi,2} \theta & = & \frac{(l-1)}{2} e_{\psi,1} + \frac{(l+1)}{2}
e_{\psi,2}.
\end{eqnarray*}
Thus, we get that $\epsilon(\Phi_{l}) \thicksim e_{\chi_0} + le_{\chi}
+ le_{\psi}$.  This proves the lemma when $f = 2$.

Assume now that $f = 3$, so $k_v = \F_{l^3}$.  The $l$-component of the
local $L$-values are given by
\begin{eqnarray*}
|lL_l(E,1)|_l & = & |(1 - a_l + l)|_l = |E(\F_l)|_l, \\
|lL_l(E \otimes \chi,1)|_l & = & |(1 - a_l + l)|_l = |E(\F_l)|_l, \\
|l^2L_l(E \otimes \psi,1)|_l & = & |(1 + a_l + a_l^2 - l + la_l + l^2)|_l =
|E(F_{l^3})|_l/|E(\F_l)|^2_l.\\
\end{eqnarray*}
Therefore, the lemma holds if $E(F_{l^3})_{l^{\infty}}$ is trivial.  If
$E(F_{l^3})_{l^{\infty}}$ is nontrivial then since $E(\F_l) \neq l$ it
follows that $E(F_{l^3})_{l^{\infty}} \simeq \Z/l\Z$, with nontrivial
action of $D$.  Consider the sequence
$$ 0 \longrightarrow \Z_l[S_3] \stackrel{\nu}{\longrightarrow} \Z_l[S_3]
\longrightarrow \Phi_l \longrightarrow 0,$$
where $\nu$ is multiplication by $\theta = \frac{(2l+1)}{3}e -
\frac{(l-1)}{3}s - \frac{(l-1)}{3}s^2$.  Since $(1-s)\theta = (1-s)$
and $(\frac{(l+2)}{3}e + \frac{(l-1)}{3}s + \frac{(l-1)}{3}s^2) \theta = l$,
it follows that the above sequence is exact.  Further, we have
\begin{eqnarray*}
e_{\chi_0} \theta & = & e_{\chi_0},\\
e_{\chi} \theta & = & e_{\chi}, \\
e_{\psi,1} \theta & = & le_{\psi,1}, \\
e_{\psi,2} \theta & = & le_{\psi,2}.
\end{eqnarray*}
Thus, we get that $\epsilon(\Phi_{l}) \thicksim e_{\chi_0} + e_{\chi}
+ le_{\psi}$.  Thus the lemma holds when $f = 3$.  This completes the
proof of the lemma.
\end{proof}

Thus, the $l$-parts of the local $L$-factors appearing in $\xi_l$ get
cancelled with $\epsilon(\Phi_l), \epsilon(\oplus_{v \mid l} k_v)$ and
$\epsilon(R\Gamma_f(K_q,T_l(E)))$'s.  Therefore, $\xi_l = (a_{\chi_0}
e_{\chi_0} + a_{\chi} e_{\chi} + a_{\psi} e_{\psi})$ for some $a_{\chi_0},
a_{\chi}, a_{\psi} \in \Z_l^{\times}$.  This implies that $\beta_l(\mu_l)$ and it's inverse are
elements of $\Z_l[S_3]$.  Hence $\beta_l(\mu_l)$ is in the image of
$K_1(\Z_l[S_3])$.  This verifies the equivariant conjecture for $l > 5$.

The proof of Theorem \ref{intro_theorem} is now complete.
\end{proof}

\noindent {\bf Remarks.}
\begin{enumerate}
\item
Theorem \ref{intro_theorem} provides the first evidence for the
(noncommutative) equivariant conjecture for motives arising from elliptic
curves.

\item
The above argument prescribes a method to verify the equivariant
conjecture (not just numerically) in the commutative case.  However, no
methods are known to compute the $L$-values of the nonabelian twists of
an elliptic curve, and therefore the above method will only give a numerical
verification in the noncommutative case.

\item  For the abelian twists, one can write down the $L$-values in terms
of an Euler system constructed by Kato (cf. \cite{kato1}).  One can possibly
use this to get a description of the $L$-values in the fundamental line
defined via a module over an Iwasawa algebra.  This should relate the
equivariant conjecture and Kato's main conjecture as formulated in
\cite{kato1}.
\end{enumerate}

\noindent {\bf Acknowledgements.}
I would like to thank Matthias Flach for his advice and encouragement
throughout.  I also would like to thank the mathematics department at the
Indian Institute of Science for hosting me while the work was in progress.

\end{document}